\let\mathbb\mathds
\DeclareMathAlphabet\oldmathcal{OMS}        {cmsy}{b}{n}
\SetMathAlphabet    \oldmathcal{normal}{OMS}{cmsy}{m}{n}
\DeclareMathAlphabet\oldmathbcal{OMS}       {cmsy}{b}{n}
\newtheorem{theorem}{Theorem}[section]
\newtheorem{lemma}[theorem]{Lemma}
\newtheorem{proposition}[theorem]{Proposition}
\newtheorem{corollary}[theorem]{Corollary}
\newtheorem{definition}[theorem]{Definition}
\newtheorem{def/prop}[theorem]{Definition/Proposition}
\newenvironment{example}{\medskip \refstepcounter{theorem}
\noindent  {\bf Example \thetheorem}.\rm}{\,}
\newenvironment{remark}{\medskip \refstepcounter{theorem}
\newcommand     {\comment}[1]   {}
\newcommand{\mute}[2] {}
\newcommand     {\printname}[1] {}

\noindent  {\bf Remark \thetheorem}.\rm}{\,}
\def\<{\langle}
\def\>{\rangle}
\def\BOne{{\mathchoice {\rm 1\mskip-4mu l} {\rm 1\mskip-4mu l}
                          {\rm 1\mskip-4.5mu l} {\rm 1\mskip-5mu l}}}
\def\tr{{\rm tr}~}
\def\fract#1#2{\raise4pt\hbox{$ #1 \atop #2 $}}
\def\decdnar#1{\phantom{\hbox{$\scriptstyle{#1}$}}
\left\downarrow\vbox{\vskip15pt\hbox{$\scriptstyle{#1}$}}\right.}
\def\bbc{{\mathbb C}}
\def\bbp{{\mathbb P}}
\def\bbq{{\mathbb Q}}
\def\bbr{{\mathbb R}}
\def\bbz{{\mathbb Z}}
\def\gra{\alpha}
\def\grb{\beta}
\def\grg{\gamma}
\def\grk{\kappa}
\def\grl{\lambda}
\def\gro{\omega}
\def\grr{\rho}
\def\grD{\Delta}
\def\grL{\Lambda}
\def\grS{\Sigma}
\def\bfk{{\bf k}}
\def\bfl{{\bf l}}
\def\bfv{{\bf v}}
\def\bfw{{\bf w}}
\def\cald{{\mathcal D}}
\def\calk{{\mathcal K}}
\def\calw{{\mathcal W}}
\def\la#1{\hbox to #1pc{\leftarrowfill}}
\def\ra#1{\hbox to #1pc{\rightarrowfill}}
\def\gt{{\mathfrak t}}
\def\gu{{\mathfrak u}}
\def\gA{{\mathfrak A}}
\def\hook{\mathbin{\hbox to 6pt{%
                 \vrule height0.4pt width5pt depth0pt
                 \kern-.4pt
                 \vrule height6pt width0.4pt depth0pt\hss}}}
\def\12{\xi_{k_1,k_2}}
\def\m5{M^5_{k_1,k_2}}
\begin{document}

\title{Simply Connected Manifolds with Infinitely Many Toric Contact Structures and Constant Scalar Curvature Sasaki Metrics}

\author{Charles P. Boyer and Christina W. T{\o}nnesen-Friedman}\thanks{Both authors were partially supported by grants from the Simons Foundation, CPB by (\#245002) and CWT-F by (\#208799)}
\address{Charles P. Boyer, Department of Mathematics and Statistics,
University of New Mexico, Albuquerque, NM 87131.}
\email{cboyer@math.unm.edu} 
\address{Christina W. T{\o}nnesen-Friedman, Department of Mathematics, Union
College, Schenectady, New York 12308, USA } \email{tonnesec@union.edu}

\keywords{Extremal Sasakian metrics, extremal K\"ahler metrics, join construction}

\subjclass[2000]{Primary: 53D42; Secondary:  53C25}

\maketitle

\markboth{Infinitely Many Toric Contact Structures}{Charles P. Boyer and Christina W. T{\o}nnesen-Friedman}


\begin{abstract}
We study a class of simply connected manifolds in all odd dimensions greater than 3 that exhibit an infinite number of toric contact structures of Reeb type that are inequivalent as contact structures. We compute the cohomology ring of our manifolds by using the join construction for Sasaki manifolds and show that all such contact structures admit a ray of compatible Sasaki metrics of constant scalar curvature (CSC). Furthermore, infinitely many such structures admit at least 3 rays of constant scalar curvature Sasaki metrics.

\end{abstract}


\section{Introduction}

The study of toric manifolds has a long and well developed history (cf. the books \cite{Oda88,Ful93,Gui94a,CoLiSc11} and references therein). These texts refer to even dimensional manifolds with the  action of a torus of one half the dimension of the manifold. The odd dimensional case has been developed only fairly recently \cite{BM93,BG00b,Ler02a,Ler04}, and there is no text devoted to this subject. This is done in the context of contact geometry. As discussed by Lerman \cite{Ler04} the topology of toric contact manifolds differs greatly from the topology of toric symplectic manifolds. For symplectic manifolds with a Hamiltonian torus action, toric manifolds are simply connected, have no torsion and the homology (cohomology) vanishes in odd degrees. In contrast none of these conditions hold in the toric contact case. However, it is true for toric contact manifolds of Reeb type that the odd dimensional cohomology vanishes through half the dimension \cite{Luo12}.

In the even dimensional case, two questions of interest in the topology of toric manifolds are: to what extent does the cohomology ring determine the toric structure of a toric manifold?, and to what extent does the cohomology ring determine the diffeomorphism (homeomorphism) type of a toric manifold? In \cite{MaSu08} a toric manifold is a smooth toric complex variety; whereas, in \cite{McD11} a toric manifold is a smooth toric symplectic manifold. In the case of both of these structures there are known rigidity theorems \cite{MaSu08,McD11}. Again the odd dimensional analogue is very different. For example, consider the Wang-Ziller manifold $M^{p,q}_{k,l}$ which is the total space of a principal $S^1$-bundle over $\bbc\bbp^p\times\bbc\bbp^q$ where $k,l$ are relatively prime integers which determine the $S^1$ action. These manifolds are toric contact manifolds, and if $p=q>1$ there are infinitely many homeomorphism types with isomorphic cohomology rings as $k$ and $l$ vary \cite{WaZi90}. Moreover, in \cite{BoTo13b} the authors gave examples of toric contact manifolds of Reeb type which admit Sasaki-Einstein metrics and that have isomorphic cohomology rings, but are not homotopy equivalent. In the present paper Theorem \ref{Kruthm} below gives necessary and sufficient conditions for homotopy equivalence in dimension 7 for the more general CSC case.

The main purpose of the present paper is to present and study a class of toric contact manifolds of Reeb type where a finiteness theorem of diffeomorphism types does hold for contact manifolds with isomorphic cohomology rings. This, however, may be the exception rather than the rule. Explicitly, we have

\begin{theorem}\label{findiff}
In each odd dimension $2p+3>5$ there exist countably infinite simply connected toric contact manifolds $M_{l_1,l_2,\bfw}$ of Reeb type depending on $4$ positive integers $l_1,l_2,w_1,w_2$ satisfying $\gcd(l_2,l_1w_i)=\gcd(w_1,w_2)=1$, and with integral cohomology ring
$$H^*(M_{l_1,l_2,\bfw},\bbz)\approx\bbz[x,y]/(w_1w_2l_1^2x^2,x^{p+1},x^2y,y^2)$$
where $x,y$ are classes of degree $2$ and $2p+1$, respectively. Furthermore, with $l_1,w_1,w_2$ fixed there are a finite number of diffeomorphism types with the given cohomology ring. Hence, in each such dimension there exist simply connected smooth manifolds with countably infinite toric contact structures of Reeb type that are inequivalent as contact structures.
\end{theorem}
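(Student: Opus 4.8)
The plan is to present $M_{l_1,l_2,\bfw}$ as a Sasaki join and to read off every assertion from that description. Concretely, I would set $M_{l_1,l_2,\bfw}=S^{2p+1}\star_{l_1,l_2}S^3_{\bfw}$, the $(l_1,l_2)$-join of the round Sasakian sphere $S^{2p+1}$ with the weighted Sasakian three-sphere $S^3_{\bfw}$, whose Reeb quotients are $\bbc\bbp^p$ (of Fano index $p+1$) and the weighted projective line $\bbc\bbp^1[\bfw]$ (of Fano index $w_1+w_2$). Under the hypotheses $\gcd(l_2,l_1w_i)=\gcd(w_1,w_2)=1$ the defining circle acts freely on $S^{2p+1}\times S^3_{\bfw}$, so $M_{l_1,l_2,\bfw}$ is a smooth toric contact manifold of Reeb type whose own Reeb quotient is the product orbifold $N=\bbc\bbp^p\times\bbc\bbp^1[\bfw]$; I write $\pi\colon M_{l_1,l_2,\bfw}\to N$ for the resulting Seifert $S^1$-bundle. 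Simple connectivity is then immediate from the homotopy sequence of $S^1\to S^{2p+1}\times S^3_{\bfw}\to M_{l_1,l_2,\bfw}$, since the total space is simply connected.

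For the cohomology ring I would run the orbifold Gysin sequence of $\pi$. Let $a,b\in H^2(N;\bbz)$ be the degree-two generators pulled back from the two factors, with $a^{p+1}=0$. The Euler class of the join is $e=l_1a+l_2b$, so that $H^2(M_{l_1,l_2,\bfw};\bbz)\cong\bbz$ is generated by a class $x$ with $\pi^*a=l_2x$ and $\pi^*b=-l_1x$. A direct computation of the degree-four cokernel of $\cup e$, in which the orbifold points of $\bbc\bbp^1[\bfw]$ of orders $w_1,w_2$ contribute a factor $w_1w_2$ and the Euler class contributes $l_1^2$, yields the torsion relation $w_1w_2l_1^2x^2=0$; the Gysin sequence forces $x^{p+1}=0$, reflecting $a^{p+1}=0$, and Poincar\'e duality supplies the degree-$(2p+1)$ class $y$ with $y^2=x^2y=0$. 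This gives the ring $\bbz[x,y]/(w_1w_2l_1^2x^2,x^{p+1},x^2y,y^2)$, whose decisive feature is that it does not depend on $l_2$.

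Granting the finiteness of diffeomorphism types for fixed $l_1,w_1,w_2$, the final assertion is a pigeonhole argument sharpened by a contact invariant. Fixing $l_1,w_1,w_2$ and letting $l_2$ range over the infinitely many integers coprime to $l_1w_1w_2$ produces infinitely many toric contact structures of Reeb type, all carrying the same cohomology ring; by finiteness, infinitely many of them, say $(M,\cald_{l_2})$ for $l_2$ in an infinite set, live on one smooth manifold $M$. To separate them I would use the first Chern class of the contact bundle,
$$c_1(\cald_{l_2})=\pi^*c_1^{orb}(N)=\big(l_2(p+1)-l_1(w_1+w_2)\big)\,x\in H^2(M;\bbz)\cong\bbz,$$
which follows from the bundle isomorphism $\cald\cong\pi^*TN$ and the pullback formulas above. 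Any contactomorphism acts on $H^2(M;\bbz)\cong\bbz$ by $\pm1$, so $|c_1(\cald)|$ is a contact invariant; since $l_2(p+1)-l_1(w_1+w_2)$ is a nonconstant affine function of $l_2$ (the coefficient $p+1$ is positive), it realizes infinitely many absolute values, and hence the $\cald_{l_2}$ are pairwise inequivalent.

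The hard part will be the finiteness of diffeomorphism types. The cohomology ring and the evaluation of $c_1(\cald)$ are essentially bookkeeping once the join is set up, and the inequivalence and pigeonhole steps are then formal. Controlling the smooth type---rather than the homotopy type, which Theorem \ref{Kruthm} handles in dimension seven---requires a classification of these Seifert bundles in dimension $2p+3\ge7$: one must check that the smooth structures compatible with the fixed cohomology ring, together with the characteristic classes and torsion linking form that the join determines, fall into finitely many classes as $l_2$ varies. This is where the surgery- and bundle-theoretic input enters, and it is the crux of the theorem.
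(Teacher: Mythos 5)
Your construction, cohomology computation, and contact-inequivalence argument all track the paper's proof. The join setup, simple connectivity from the homotopy sequence of $S^1\to S^{2p+1}\times S^3_\bfw\to M_{l_1,l_2,\bfw}$, and the separation of contact structures by the invariant $|c_1(\cald_{l_1,l_2,\bfw})|=|l_2(p+1)-l_1|\bfw||$ are exactly what the paper does. Your orbifold Gysin sequence over $\bbc\bbp^p\times\bbc\bbp^1[\bfw]$ is a repackaging of the paper's Leray--Serre spectral sequence for $S^{2p+1}\times S^3_\bfw\to M_{l_1,l_2,\bfw}\to \mathsf{B}S^1$ compared with the fibration over $\mathsf{B}S^1\times\mathsf{B}S^1$; both hinge on the same input, namely $H^{2k}_{orb}(\bbc\bbp^1[\bfw],\bbz)\cong\bbz_{w_1w_2}$ for $k\geq 2$ (Lemma \ref{cporbcoh}), which produces $d_4(\gra)=w_1w_2l_1^2s^2$, i.e.\ your torsion relation $w_1w_2l_1^2x^2=0$. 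One detail to make explicit: $\pi^*a=l_2x$ and $a^{p+1}=0$ only give $l_2^{p+1}x^{p+1}=0$, and this forces $x^{p+1}=0$ precisely because $\gcd(l_2,l_1w_i)=1$ makes $l_2^{p+1}$ a unit modulo $w_1w_2l_1^2$; this is where the arithmetic hypothesis of the theorem actually enters.

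The genuine gap is the finiteness of diffeomorphism types, which you explicitly defer and propose to attack through ``a classification of these Seifert bundles'' with surgery- and bundle-theoretic input. That is not the paper's route, and no such classification is needed (nor is one available in all dimensions $2p+3\geq 7$); as written your proposal stops short of a proof of the second assertion of the theorem, and hence of the pigeonhole conclusion that depends on it. The paper's argument (Theorem \ref{findiffeo}, following Wang--Ziller \cite{WaZi90}) is soft: by Corollary \ref{ratcoh}, $H^*(M_{l_1,l_2,\bfw},\bbq)\cong H^*(S^2\times S^{2p+1},\bbq)$, so all rational Pontrjagin classes vanish, and the Sullivan minimal model $(\grL(a,b),db=a^2)\otimes(\grL(c),0)$ shows that $M_{l_1,l_2,\bfw}$ is formal; Sullivan's finiteness theorem (Theorem 13.1 of \cite{Sul77}, see also \cite{KrTr91}) then yields that simply connected formal manifolds with a fixed finitely generated cohomology ring and fixed rational Pontrjagin classes fall into finitely many diffeomorphism types. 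Since the ring $\bbz[x,y]/(w_1w_2l_1^2x^2,x^{p+1},x^2y,y^2)$ is independent of $l_2$ and the rational Pontrjagin data are identically zero, finiteness for fixed $l_1,w_1,w_2$ follows with no bundle classification at all; replacing your surgery-theoretic placeholder with this formality argument is what is needed to close the proof.
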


The theorem applies to both the spin and non-spin case.
Theorem \ref{findiff} has the following consequences for Sasakian geometry:

\begin{theorem}\label{sasstruct}
The contact structures of Theorem \ref{findiff} admit a $p+2$ dimensional cone of Sasakian structures with a ray of constant scalar curvature Sasaki metrics and for $l_2$ large enough they admit at least $3$ such rays. Furthermore, the moduli space of Sasaki metrics with the same underlying CR structure, the $p+2$-dimensional Sasaki cone has a 2-dimensional subcone (the reduced $\bfw$-Sasaki cone, $\grk_\bfw$) that is exhausted by extremal Sasaki metrics.
\end{theorem}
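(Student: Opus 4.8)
The plan is to realize each manifold as the Sasaki join $M_{l_1,l_2,\bfw}=S^{2p+1}\star_{l_1,l_2}S^3_\bfw$, in which $S^{2p+1}$ carries the standard Sasaki structure with K\"ahler quotient $\bbc\bbp^p$ and $S^3_\bfw$ is the weighted $3$-sphere with weight vector $\bfw=(w_1,w_2)$. The construction is toric, so the maximal torus of the contact transformation group has dimension $p+2$; this is precisely the dimension of the full Sasaki cone. Inside it, the $2$-torus generated by the join circle together with the weighted Reeb field of the $S^3_\bfw$ factor cuts out the $2$-dimensional subcone $\grk_\bfw$. The first step is to verify that for every quasi-regular Reeb field lying in $\grk_\bfw$ the Sasaki quotient is a projective line (orbi)bundle $P(\calo\oplus L)\to\bbc\bbp^p$, i.e. an \emph{admissible} ruled manifold in the sense of Apostolov--Calderbank--Gauduchon--T{\o}nnesen-Friedman (ACGT); the ray in $\grk_\bfw$ selects the admissible K\"ahler class, while $l_1,l_2,\bfw$ fix the bundle $L$.

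The core of the argument is the ACGT admissible construction over the base $\bbc\bbp^p$. Since $\bbc\bbp^p$ is a positive K\"ahler--Einstein, hence constant-scalar-curvature, Hodge manifold, the admissible existence theorem produces an extremal K\"ahler metric in \emph{every} admissible K\"ahler class on $P(\calo\oplus L)$. Because these metrics are given by explicit formulae (solutions of a single ODE) that depend smoothly on the ray parameter, and extremality is invariant under transverse homothety, pulling them back through the orbifold Boothby--Wang fibration yields a compatible extremal Sasaki metric for every quasi-regular ray in $\grk_\bfw$; smoothness in the parameter, together with density of the quasi-regular rays, then extends this to all of $\grk_\bfw$, which is thus exhausted by extremal Sasaki metrics.

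It remains to locate the constant-scalar-curvature rays. After projectivizing $\grk_\bfw$ by transverse scaling one is left with a one-parameter family of rays, and along it the requirement that the admissible extremal metric be CSC---equivalently, that its extremal affine function be constant---reduces to the vanishing of an explicit polynomial in the ray parameter whose coefficients are polynomials in $l_1,l_2,w_1,w_2$ and $p$. The always-present ray is obtained by exhibiting one admissible root of this polynomial, equivalently from the vanishing of the transverse Futaki invariant at a distinguished Reeb field. The main obstacle is the lower bound of three rays for $l_2$ large: here I would analyze the CSC polynomial as $l_2\to\infty$, rescaling the parameter so that its leading asymptotics become tractable, and then show via a Descartes/Sturm sign count or an intermediate-value argument that it acquires at least three roots interior to the cone. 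The delicate points---checking that these roots are simple, that they stay strictly inside $\grk_\bfw$ rather than coalescing or drifting to the boundary, and that the corresponding Reeb fields yield genuinely distinct CSC Sasaki structures---are where the real work lies, and this asymptotic root count is the step I expect to be hardest.
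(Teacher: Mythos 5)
Your overall architecture coincides with the paper's: realize the contact structures via the join $M_{l_1,l_2,\bfw}=S^{2p+1}\star_{l_1,l_2}S^3_\bfw$, so that the Sasaki cone is $(p+2)$-dimensional with the $2$-dimensional $\bfw$-subcone $\gt^+_\bfw$; identify the quasi-regular quotients with admissible log pairs $(S_n,\grD)$, $S_n=\bbp(\BOne\oplus L_n)\to\bbc\bbp^p$; and reduce the CSC condition along a ray parametrized by $b$ to the vanishing of an explicit polynomial $f(b)$, with one root always present and three for $l_2$ large. This is exactly how the paper proceeds, deferring the analytic core to \cite{BoTo14a} (Proposition \ref{admcsc} and the discussion following it).

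Two of your steps, however, have genuine gaps as stated. First, it is \emph{not} a general ACGT theorem that every admissible K\"ahler class on $\bbp(\calo\oplus L)$ over a CSC base carries an extremal metric: in the admissible ansatz the extremal equation reduces to an ODE whose solution defines a genuine metric only when the associated extremal polynomial is positive on the relevant interval, and ACGT themselves exhibit admissible classes (over bases of nonpositive scalar curvature) where existence fails. For the orbifold classes arising here this positivity must be verified, and that verification is precisely Theorem 1.2 of \cite{BoTo14a} and its proof in Sections 4--5 there; you cannot appeal to positivity of the base alone. Second, your passage from quasi-regular to irregular rays is unsound as written: an irregular Reeb field has no Boothby--Wang quotient to pull back from, and ``smooth dependence plus density of quasi-regular rays'' does not by itself produce an extremal metric at an irrational parameter (a limit of extremal structures need not be extremal, nor need it exist). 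The correct mechanism, used in \cite{BoTo14a}, is that the admissible construction is applied directly to the transverse K\"ahler structure and makes sense verbatim for irrational $b$, so no limiting argument is required. Two smaller points: the ray $b=\frac{w_2}{w_1}$ is not admissible (product transverse structure) and must be handled separately to get ``exhausted by extremal metrics'' --- it is extremal, and CSC only when $\bfw=(1,1)$; and the root count does not need your asymptotic analysis: $f(b)$ has seven terms, hence at most six positive roots by Descartes, and it has a triple root at the non-admissible $b=\frac{w_2}{w_1}$ (a root of multiplicity four at $b=1$ when $\bfw=(1,1)$, where moreover the Weyl group identifies $b$ with $1/b$ in the reduced cone, so three unreduced CSC rays give only two reduced ones, with the explicit criterion $l_2>\frac{2(2p+3)}{p(p+1)}l_1$); the existence of three roots for $l_2$ large in the case $w_1>w_2$ is then quoted from Theorem 1.3 and Section 5.2 of \cite{BoTo14a} rather than re-proved.
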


As in \cite{BoTo14a} most of the CSC rays are irregular Sasakian structures. The appearance of more than one ray of CSC Sasaki metrics in the same Sasaki cone was first noticed by Legendre \cite{Leg10} in the case of $S^3$-bundles over $S^2$. As mentioned in \cite{BoTo14a} this now appears to be fairly common. 

Furthermore, in the homogeneous Wang-Ziller case when $\bfw=(1,1)$ two of the three CSC metrics are actually equivalent as discussed in Section \ref{WZsec} below. As explicit examples we consider the Wang-Ziller manifolds $M^{1,p}_{l_2,1}$ which are diffeomorphic to $S^{2p+1}$-bundles over $S^2$ of which there are precisely two.  If $p$ is odd or if both $p$ and $l_2$ are even, then $M^{1,p}_{l_2,1}$ is diffeomorphic to the trivial bundle $S^2\times S^{2p+1}$; whereas, if $p$ is even and $l_2$ is odd $M^{1,p}_{l_2,1}$ is diffeomorphic to the non-trivial $S^{2p+1}$-bundle over $S^2$, denoted $S^2\tilde{\times}S^{2p+1}$. As a consequence of the discussion above and Theorem \ref{WZcsc} we have

\begin{theorem}\label{S2S}
The manifolds $S^2\times S^{2p+1}$ and $S^2\tilde{\times}S^{2p+1}$ admit a countable infinity of toric contact structures of Reeb type that are inequivalent as contact structures and labelled by the positive integer $l_2$. Moreover, when $p\geq 5$ there are at least two rays of CSC Sasaki metrics in the reduced Sasaki cone  in each such structure. For $p=1,2,3,4$ and $l_2\geq 6,3,2,2$, respectively we also have two rays of CSC Sasaki metrics.
\end{theorem}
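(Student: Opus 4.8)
The plan is to deduce Theorem~\ref{S2S} by specializing the family of Theorem~\ref{findiff} to the homogeneous weight $\bfw=(1,1)$ with $l_1=1$, which is precisely the Wang--Ziller slice $M^{1,p}_{l_2,1}$. Setting $w_1=w_2=l_1=1$ makes the torsion coefficient $w_1w_2l_1^2$ equal to $1$, so the relation $x^2=0$ holds and the ring of Theorem~\ref{findiff} collapses to $\bbz[x,y]/(x^2,y^2)$ with $\deg x=2$ and $\deg y=2p+1$, the integral cohomology ring of $S^2\times S^{2p+1}$. (The case $p=1$, in dimension five, is the $S^3$-bundle-over-$S^2$ situation first studied by Legendre; it lies just outside the stated range $2p+3>5$ of Theorem~\ref{findiff}, but the join and the CSC analysis below apply to it as well.) Each $M^{1,p}_{l_2,1}$ is simply connected and is the total space of a linear $S^{2p+1}$-bundle over $S^2$, hence is one of the two bundles classified by $\pi_1(\mathrm{SO}(2p+2))=\bbz_2$; these are distinguished by their second Stiefel--Whitney class, the trivial bundle $S^2\times S^{2p+1}$ being spin and the nontrivial bundle $S^2\tilde{\times}S^{2p+1}$ not. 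I would then quote the parity computation recorded in the discussion preceding the theorem: for $p$ even the parity of $l_2$ selects between the two bundles (even $l_2$ giving the trivial one, odd $l_2$ the nontrivial one), so each occurs for infinitely many $l_2$, while for $p$ odd every $l_2$ yields the trivial bundle.

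With the diffeomorphism types settled, the countable infinity of inequivalent toric contact structures of Reeb type is inherited from Theorem~\ref{findiff}. The subtlety is that the cohomology ring is now torsion-free and constant along the family, so the members must be separated by a genuine contact invariant rather than by the underlying topology: the first Chern class $c_1(\cald)$ of the contact distribution is a multiple of the generator $x\in H^2\cong\bbz$ whose coefficient depends nontrivially on $l_2$ and takes infinitely many distinct values. Since any diffeomorphism acts on $H^2\cong\bbz$ only by $\pm1$, distinct coefficients yield inequivalent contact structures, and combined with the distribution of bundle types above this places infinitely many inequivalent contact structures on each manifold that occurs.

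The substantive content is the count of CSC rays, which I would extract from the construction of admissible metrics underlying Theorem~\ref{WZcsc}. Writing $M^{1,p}_{l_2,1}$ as the join $S^{2p+1}\star_{1,l_2}S^3$ and applying the Calabi ansatz over the base $\bbc\bbp^p\times\bbc\bbp^1$, the CSC condition within the reduced $\bfw$-Sasaki cone $\grk_\bfw$ reduces to an algebraic equation in a single momentum parameter, with CSC rays corresponding to the roots lying in the open admissible interval. In general this yields up to three rays (Theorem~\ref{sasstruct}), but for the homogeneous weight $\bfw=(1,1)$ the extra $\bbz_2$ symmetry of the round $\bbc\bbp^1$ factor, absent for genuine weights, identifies two of them; this is the coincidence of two of the three CSC metrics noted in the discussion. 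One distinguished root is always admissible and furnishes the first ray, while the symmetric pair contributes a second ray exactly when its defining root enters the admissible interval. I would check that this occurs automatically for all $p\geq5$ and reduces, for the small cases, to the sharp integer bounds $l_2\geq6,3,2,2$ when $p=1,2,3,4$ respectively.

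The main obstacle is this last threshold analysis for $p=1,2,3,4$. The generic ``$l_2$ large enough'' of Theorem~\ref{sasstruct} must be sharpened to the exact integer thresholds, which means locating the relevant root of the CSC polynomial and determining case by case precisely when the positivity and admissibility constraints---whose margin narrows as $p$ decreases---first push it into the interior of the cone. Everything else is an assembly of Theorems~\ref{findiff}, \ref{sasstruct}, and~\ref{WZcsc} with the elementary bundle classification.
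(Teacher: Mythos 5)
Your proposal is correct and follows essentially the same route as the paper, which obtains Theorem \ref{S2S} by combining the Wang--Ziller diffeomorphism classification (Theorem \ref{wzthm}), the first Chern class $c_1(\cald_{1,l_2,(1,1)})=(l_2(p+1)-2)\grg$ to separate the contact structures, and Theorem \ref{WZcsc} specialized to $l_1=1$. The ``threshold analysis'' you flag as the main remaining obstacle is in fact immediate: Theorem \ref{WZcsc} already gives the explicit criterion $l_2>\frac{2(2p+3)}{p(p+1)}l_1$, which with $l_1=1$ yields $l_2>5,\ 7/3,\ 3/2,\ 11/10$ for $p=1,2,3,4$ (hence $l_2\geq 6,3,2,2$) and a bound below $1$ for $p\geq 5$, so no further case-by-case location of roots is required.
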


Notice that Theorems \ref{findiff} and \ref{sasstruct} exclude the case $p=1$, whereas, Theorem \ref{S2S} does not. The reason for this is topological. In the $p=1$ case there are two differentials at the $E_2$ level in the spectral sequence coming from the join construction, whereas, when $p>1$ there is only one. As a consequence the cohomology ring of Theorem \ref{findiff} is independent of $l_1,w_1,w_2$ when $p=1$, and the manifolds are all diffeomorphic to either $S^2\times S^{3}$ or $S^2\tilde{\times}S^{3}$ depending on the second Stiefel-Whitney class. This topological difference also occurs at the level of the K\"ahler base space of the Sasaki manifold as discussed in Section \ref{cp1cpp} below, and is related to the fact that the Hirzebruch surface phenomenon of distinct complex structures on the same 4-manifold does not directly generalize to higher dimensional projective spaces, cf. \cite{ChMaSu10}. The existence of more than one complex structure in the $p=1$ case gives rise to the bouquet phenomenon as described in \cite{Boy10b,Boy10a,Boy11,BoPa10}, and is related to the existence of different conjugacy classes of maximal tori in the contactomorphism group \cite{Ler03b}. So when $p=1$ there are toric contact structures of Reeb type that are $T^3$ equivariantly inequivalent, but $T^2$ equivariantly equivalent.

\section{$\bbc\bbp^1$-Bundles over $\bbc\bbp^p$}\label{cp1cpp}
These are examples of what have been called {\it two-stage generalized Bott towers} and their topology has been thoroughly studied in \cite{ChMaSu10,ChPaSu12}. We shall assume that $p>1$ as the $p=1$ case gives the well understood Hirzebruch surfaces. These all have the form of the projectivization of a split rank $2$ vector bundle, namely, $S_n=\bbp(\BOne\oplus L_n)$ where $\BOne$ denotes the trivial complex line bundle over $\bbc\bbp^p$ and $L_n$ denotes the complex line bundle over $\bbc\bbp^p$ whose first Chern class is $n$ times a generator of $H^2(\bbc\bbp^p,\bbz)$. These are all toric K\"ahler manifolds, and as such are classified by their Delzant polytope \cite{Del88}.

\subsection{The Diffeomorphism Type}\label{diffeosec}
In contrast to the case of Hirzebruch surfaces, when $p>1$ there is the following result of \cite{ChMaSu10,ChPaSu12}:

\begin{theorem}\label{cmsthm}
For  $p>1$any two such bundles $S_n$ and $S_{n'}$ are diffeomorphic if and only if $|n'|=|n|$, and the diffeomorphism type is completely determined by their cohomology ring. 
\end{theorem}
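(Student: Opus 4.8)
The plan is to prove the three conditions
\begin{center}
$S_n$ and $S_{n'}$ are diffeomorphic \quad$\Longleftrightarrow$\quad $H^*(S_n;\bbz)\cong H^*(S_{n'};\bbz)$ as graded rings \quad$\Longleftrightarrow$\quad $|n|=|n'|$
\end{center}
are mutually equivalent; this simultaneously gives the stated diffeomorphism criterion and the assertion that the diffeomorphism type is read off from the cohomology ring. The first step is to record the ring. Since $S_n=\bbp(\BOne\oplus L_n)$ is a $\bbc\bbp^1$-bundle over $\bbc\bbp^p$, the Leray--Hirsch theorem (equivalently the projective bundle formula applied to $E=\BOne\oplus L_n$, which has $c_1(E)=nx$ and $c_2(E)=0$) yields
$$H^*(S_n;\bbz)\cong\bbz[x,y]/(x^{p+1},\,y^2-nxy),$$
where $x$ is the pullback of the generator of $H^2(\bbc\bbp^p;\bbz)$ and $y$ is the relative hyperplane class. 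For $p>1$ one has $H^4(S_n;\bbz)\cong\bbz^2$ with basis $\{x^2,xy\}$, a fact I will use to compare ring structures.

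Two of the implications are immediate or easy. A diffeomorphism induces a graded ring isomorphism on integral cohomology, giving the first arrow. For the implication $|n|=|n'|\Rightarrow$ diffeomorphic, I would use the insensitivity of projectivization to twists: $\bbp(E)\cong\bbp(E\otimes L)$ for any line bundle $L$. Taking $E=\BOne\oplus L_n$ and $L=L_{-n}$ gives $E\otimes L_{-n}=L_{-n}\oplus\BOne$, hence a biholomorphism $S_n\cong S_{-n}$; combined with the tautological identity $S_n=S_n$ this shows $|n|=|n'|$ forces $S_n$ and $S_{n'}$ biholomorphic, in particular diffeomorphic.

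The substance of the theorem, and the main obstacle, is the cohomological rigidity direction: a graded ring isomorphism forces $|n|=|n'|$. The key is to extract $|n|$ as a ring-theoretic invariant. Using the relation $y^k=n^{k-1}x^{k-1}y$ for $k\ge 1$, a binomial expansion gives, for $\ell=ax+by\in H^2$,
$$\ell^{\,p+1}=\frac{(a+bn)^{p+1}-a^{p+1}}{n}\,x^p y,$$
and since $x^p y$ generates the top group $H^{2p+2}(S_n;\bbz)$, we get $\ell^{\,p+1}=0$ if and only if $(a+bn)^{p+1}=a^{p+1}$. When $p$ is even this pins down the line $\bbz x$ as the \emph{unique} line of forms killed by raising to the $(p+1)$st power, so any isomorphism $\phi$ must send $x$ to $\pm x'$; writing $\phi(y)=\gamma x'+\delta y'$ with $\delta=\pm1$ and imposing that $\phi$ respect $y^2=nxy$, I would equate coefficients in the basis $\{(x')^2,x'y'\}$ of $H^4(S_{n'};\bbz)$ to obtain $\gamma^2=\pm n\gamma$ and $2\gamma\delta+\delta^2 n'=\pm n\delta$; a short check of the two resulting cases ($\gamma=0$ or $\gamma=\pm n$) forces $n'=\pm n$.

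When $p$ is odd there is a second distinguished line, solving $2a+bn=0$, so $\phi$ may permute the two lines; here I would run the same coefficient comparison over each of the finitely many ways $x$ can map to a primitive generator of one of the two distinguished lines of $S_{n'}$, verifying in each case that compatibility with the multiplicative relation and $\det\phi=\pm1$ again yields $|n'|=|n|$. I expect this finite case analysis in the odd-$p$ situation to be the most delicate point, but it is a bounded computation rather than a conceptual difficulty; the even-$p$ uniqueness of the nilpotent line is what makes the argument transparent, and the foreshadowing fact that replacing $y$ by $y-nx$ turns the relation $y^2-nxy$ into $\tilde y^2+nx\tilde y$ already shows the ring can only detect $n$ up to sign.
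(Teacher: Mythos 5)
You should first note that the paper itself contains no proof of this statement: Theorem \ref{cmsthm} is quoted verbatim from Choi--Masuda--Suh \cite{ChMaSu10} and Choi--Park--Suh \cite{ChPaSu12}, so your self-contained argument is necessarily a different route from the paper's (which simply buys brevity by leveraging the general cohomological rigidity theory for generalized Bott towers and quasitoric manifolds with $b_2=2$). Your proof is essentially correct, and its structure mirrors what the cited literature does: the projective bundle formula gives the presentation $\bbz[x,y]/(x^{p+1},y^2-nxy)$ (matching the paper's \eqref{cohring} up to the harmless sign $y\mapsto y-nx$, which is exactly your closing remark that the ring sees only $|n|$); the twist invariance $\bbp(E)\cong\bbp(E\otimes L_{-n})$ correctly yields $S_n\cong S_{-n}$, so $|n|=|n'|$ implies diffeomorphic; and your formula $\ell^{p+1}=\frac{(a+bn)^{p+1}-a^{p+1}}{n}\,x^py$ is right, so for $p$ even and $n\neq 0$ the nilpotent classes in $H^2$ form the single line $\bbz x$, forcing $\phi(x)=\pm x'$, after which your two cases $\gamma=0$ and $\gamma=\pm n$ do give $|n'|=|n|$. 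I also confirm that the odd-$p$ case analysis you only sketch does close: with $n,n'>0$, the second nilpotent line of $S_{n'}$ is generated by $-\tfrac{n'}{2}x'+y'$ ($n'$ even) or $-n'x'+2y'$ ($n'$ odd), and if $\phi(x)$ lands on that generator, imposing $\phi(y)^2=n\,\phi(x)\phi(y)$ in the basis $\{(x')^2,x'y'\}$ together with determinant $\pm 1$ on $H^2$ forces either a contradiction or $n=n'=2$ (even subcase) resp.\ $n=n'=1$ (odd subcase), so $|n'|=|n|$ in every branch.

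One small caveat you should patch: your claim that $p$ even ``pins down $\bbz x$ as the \emph{unique} nilpotent line'' requires $n\neq 0$; for $n=0$ the condition $(p+1)a^pb=0$ gives the two lines $\bbz x$ and $\bbz y$. The case $n=0$ versus $n'\neq 0$ is then excluded separately, e.g.\ by counting nilpotent lines when $p$ is even, or for any $p$ by noting that $y^2=0$ in $H^*(S_0)$ while $\ell^2=a^2(x')^2+(2ab+b^2n')x'y'=0$ forces $\ell=0$ in $H^*(S_{n'})$ when $n'\neq 0$ and $p>1$. With that hole filled, your argument is a complete and elementary proof of exactly the rigidity statement the paper outsources.
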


Furthermore, the cohomology ring of $S_n$ takes the form
\begin{equation}\label{cohring}
H^*(S_n,\bbz)=\bbz[x_1,x_2]/\bigl(x_1^{p+1},(x_2(nx_1+x_2)\bigr)
\end{equation}
where $x_1,x_2$ have degree $2$.

\subsection{Toric Structures}\label{torsec}
The following result is a consequence of a theorem of Kleinschmidt \cite{Kle88} (see Theorem 7.3.7 of \cite{CoLiSc11}) together with Theorem \ref{cmsthm}.
\begin{proposition}\label{Klprop}
When $p>1$ there is a unique projective toric structure on the smooth manifold $S_n$ for each nonnegative integer $n$.
\end{proposition}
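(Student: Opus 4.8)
The plan is to promote the uniqueness question for the underlying smooth manifold into a classification problem for smooth projective toric varieties of a fixed topological type, and then feed it into Kleinschmidt's theorem. First I would observe that any projective toric structure on the smooth manifold $S_n$ exhibits it as a smooth projective toric variety $X$ of complex dimension $p+1$ whose integral cohomology is the ring \eqref{cohring}; in particular $H^2(X,\bbz)$ has rank $2$. For a smooth complete toric variety the Picard rank coincides with the second Betti number and equals $\#\{\mathrm{rays}\}-\dim$, so $X$ must have exactly $p+3$ rays, i.e. Picard number $2$. (All such varieties are automatically projective, so restricting to projective toric structures costs nothing here.)

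Next I would invoke Kleinschmidt's classification (Theorem 7.3.7 of \cite{CoLiSc11}, after \cite{Kle88}): every smooth complete toric variety of dimension $p+1$ with $p+3$ rays is a projectivized split bundle $\bbp(\calo\oplus\calo(b_1)\oplus\cdots\oplus\calo(b_r))$ over some $\bbc\bbp^{\,p+1-r}$. This replaces the a priori wild space of toric structures by a discrete combinatorial list, indexed by the base dimension $s=p+1-r$ and the twisting data $b_1,\dots,b_r$. The target is then to show that, among this list, the ring \eqref{cohring} is realized only by the $\bbc\bbp^1$-bundles $S_m=\bbp(\BOne\oplus L_m)$ over $\bbc\bbp^p$.

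The cohomology ring of the variety over $\bbc\bbp^{\,s}$ is generated in degree $2$ with minimal relations forced in cohomological degrees $2(s+1)$ and $2(r+1)$. I would eliminate the unwanted cases using intrinsic ring invariants: when both $s\geq 2$ and $r\geq 2$ there is no degree-$4$ relation, so $b_4=3$, whereas \eqref{cohring} gives $b_4=2$; and the fibrations over $\bbc\bbp^1$ (the case $s=1$) always carry a square-zero degree-$2$ class, namely the pullback of the hyperplane class from the base, which \eqref{cohring} does not admit when $n\neq0$. Hence the only surviving possibility is $r=1$, i.e. $X\cong S_m$ for some integer $m$, and the toric isomorphism $S_m\cong S_{-m}$ (dualize and twist the rank-$2$ bundle) lets us take $m\geq 0$. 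Finally Theorem \ref{cmsthm}, which says the diffeomorphism type of $S_m$ is determined by its cohomology ring and that $S_m\cong_{\mathrm{diff}}S_n$ forces $|m|=|n|$, gives $m=n$; thus any two projective toric structures on the smooth manifold $S_n$ are equivariantly isomorphic, which is the asserted uniqueness.

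The main obstacle is the elimination step: ruling out the other Kleinschmidt types from \eqref{cohring} alone. One must verify that the quantities used (the Betti number $b_4$, the existence of a square-zero degree-$2$ class, and the degrees in which minimal relations are genuinely forced) are invariants of the graded ring rather than artifacts of a chosen presentation, and one must treat separately the degenerate value $n=0$, where $S_0=\bbc\bbp^1\times\bbc\bbp^p$ lies on the boundary of Kleinschmidt's parameter space and the square-zero argument is vacuous; there the product structure together with \ref{cmsthm} yields uniqueness directly.
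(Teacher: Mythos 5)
Your route is precisely the paper's: the paper offers no argument beyond citing Kleinschmidt's theorem (Theorem 7.3.7 of \cite{CoLiSc11}) together with Theorem \ref{cmsthm}, and your expansion of that citation is sound in the main case. The reduction to $p+3$ rays, the Betti number count (for a projectivization with base and fiber both of complex dimension at least $2$ one has $b_4=3$, while \eqref{cohring} gives $b_4=2$ when $p>1$), and the square-zero test (using $x_2^2=-nx_1x_2$, the square of $ax_1+bx_2$ is $a^2x_1^2+(2ab-nb^2)x_1x_2$, which vanishes only for $a=b=0$ when $n\neq 0$) are all genuinely ring-theoretic invariants and correctly eliminate every Kleinschmidt type except $r=1$; Theorem \ref{cmsthm} then forces $m=n$ among the $S_m$, $m\geq 0$. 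This establishes the proposition for all $n\geq 1$.

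The gap is your final sentence on $n=0$. Theorem \ref{cmsthm} is a statement only about the $\bbc\bbp^1$-bundles $S_m$ over $\bbc\bbp^p$; it says nothing about the Kleinschmidt types fibered over $\bbc\bbp^1$, which is exactly the family your square-zero test stops excluding when $n=0$, so nothing yields uniqueness ``directly'' there. In fact these types cannot be excluded at all: take $E=\calo\oplus\calo(1)\oplus\calo(p)\oplus\calo^{\oplus(p-2)}$ over $\bbc\bbp^1$ (so $p\geq 2$, ${\rm rank}\,E=p+1$, $c_1(E)=p+1$) and set $X=\bbp(E)$, a smooth projective toric $\bbc\bbp^p$-bundle over $\bbc\bbp^1$. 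Since complex vector bundles over $S^2$ are classified by $c_1$, the bundle $E\otimes\calo(-1)$ is topologically trivial, hence $X$ is diffeomorphic to $S^2\times\bbc\bbp^p=S_0$; but $X$ is not biholomorphic to $\bbc\bbp^1\times\bbc\bbp^p$: for $p\geq 2$ an isomorphism would identify the two $\bbc\bbp^p$-fibrations over $\bbc\bbp^1$ (the product's other extremal contraction has $\bbc\bbp^1$ fibers), forcing $E$ to be a line-bundle twist of the trivial bundle and contradicting uniqueness of the Grothendieck splitting; equivalently, the two fans are not isomorphic, while biholomorphic complete toric varieties have isomorphic fans. So $S_0$ carries at least two non-isomorphic projective toric structures, and the literal statement of Proposition \ref{Klprop} fails at $n=0$ --- a boundary case that the paper's one-line citation glosses over as well. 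The honest repair is to restrict the proposition, and your proof, to $n\geq 1$; this is the range the paper actually uses, since the admissible construction in Section \ref{admissible} assumes $n\neq 0$ from the outset.
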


We are also interested in toric orbifolds. Although we do not make explicit use of the combinatorial approach, it is interesting to note that toric orbifolds are all normal varieties, and normal toric varieties are classified by their {\it fans} \cite{Oda88,CoLiSc11} which are certain collections of convex rational polyhedral cones (see \cite{CoLiSc11} for precise definitions). 
In particular, orbifolds are described by {\it simplicial fans} (Theorem 3.1.19 of \cite{CoLiSc11}). However, the orbifolds of interest to us in this paper are all biholomorphic to smooth projective toric varieties; hence, their fans are that of the smooth variety. The orbifold structure is better encoded in the varieties together with line bundles or certain reflexive sheaves. These are related to polytopes.

The K\"ahler cones of interest to us are easy to describe. The K\"ahler cone $\calk(X)$ of a simplicial projective toric variety $X$ is the interior of a strongly (the origin is a face) convex polyhedral cone in $H^{1,1}(X,\bbr)$ cf. \cite{Cox97}. For an $n$-dimensional projective toric variety we also identify $\calk(X)$ with the Chow group $A_{n-1}(X)\otimes\bbr$ of Weil divisors (mod rational equivalence) on $X$. For $D\in A_{n-1}(X)$ we let $PD(D)$ denote its Poincar\'e dual, that is the corresponding class in $H^{1,1}(X,\bbr)$. In the correspondence between toric varieties and fans, Weil divisors correspond to 1-dimensional cones. Since most of the divisors of interest to us are branch divisors, following \cite{LeTo97} for labelled polytopes, we could label the 1-dimensional cones in a fan by the ramification index of the corresponding divisor. A divisor $D\in A_{n-1}(X)\otimes\bbr$ with a ramification index of $1$ is an ordinary Weil divisor, whereas a divisor with ramification index $m>1$ is a $\bbq$-divisor with coefficients in the rational numbers $\bbq$. This gives rise to {\it labelled fans}, a notion which we have not yet seen in the literature.

The orbifolds that arise in our construction take the form of log pairs $(S_n,\grD)$ where $S_n=\bbp(\BOne\oplus L_n)$ and $\grD$ is a branch divisor of the form 
$$\grD=(1-\frac{1}{m_1})D_1+ (1-\frac{1}{m_2})D_2,$$
where $D_1$ is the `zero section' of $\bbp(\BOne\oplus L_n)$ and $D_2$ is the `infinity section', and the $m_i$ are the corresponding ramification indices (see \cite{BoTo14a} for details).

\subsection{The Admissible Construction}\label{admissible}
We will now assume that $n\neq 0$ and that $(\bbc\bbp^p,\gro_{FS}$) is the standard Fubini-Study K\"ahler structure with K\"ahler metric
$g_{FS}$. In particular, $[\omega_{FS}]$ is a generator of $H^2(\bbc\bbp^p, \bbz)$. 
Then 
$(\omega_{n},g_{n}): =(2n\pi \omega_{FS}, 2n\pi g_{FS})$ satisfies that 
$( g_{n}, 
\omega_{n})$ or $(- g_{n}, 
-\omega_{n})$
is a K\"ahler structure (depending on the sign of $n$). 
In either case, we let $(\pm g_{n}, \pm \omega_{n})$ refer to the K\"ahler structure.
We write the scalar curvature of $\pm g_{n}$ as $\pm 2p s_{n}$.
[So, if e.g. $-g_{n}$ is a K\"ahler structure with positive scalar curvature, $s_{n}$ would be negative.] 
Since the (scale invariant) Ricci form of $\omega_{FS}$ is given by $s_n\omega_n$ as well as by $2\pi (p+1) \omega_{FS}$, it is easy to see that $s_n=\frac{p+1}{n}$.

Since  $L_n$ is the line bundle over $\bbc\bbp^p$ such that $c_1(L_n) = n [\omega_{FS}]$, it is clear that
$c_{1}(L_n)= [\omega_n/2\pi]$.
Then, following \cite{ACGT08}, the total space of the projectivization
$S_n=\bbp(\BOne\oplus L_n)$ is called {\it admissible}.

On these manifolds, a particular type of K\"ahler metric on $S_n$,  also called
{\it admissible}, can now be constructed \cite{ACGT08}. This construction is reviewed and adapted to the log varieties $(S_n,\grD)$ for our purposes in \cite{BoTo13b} and more generally in \cite{BoTo14a}. An admissible K\"ahler manifold is a 
special case of a K\"ahler manifold admitting a so-called Hamiltonian $2$-form \cite{ApCaGa06}. More specifically, the admissible metrics as described in  \cite{BoTo13b} and \cite{BoTo14a} admit a Hamiltonian $2$-form of order one.

In Section \ref{admissiblesummary} we will summarize the results obtained in \cite{BoTo14a} as they apply to the case at hand.

\section{The $S^3_\bfw$ Sasaki Join Construction}
We refer to \cite{BGO06,BG05} for a thorough discussion of the join construction. We begin with the standard Sasakian structure on $S^{2p+1}$ together with the Hopf fibration $S^1\ra{1.5} S^{2p+1}\fract{\pi}{\ra{1.5}} \bbc\bbp^p$, and the `weighted' Sasakian structure on $S^3$ together with the `weighted Hopf fibration' (not really a fibration) $S^1\ra{1.5} S^{3}\fract{\pi_\bfw}{\ra{1.5}} \bbc\bbp^1[\bfw]$ where $\bbc\bbp^1[\bfw]$ denotes the weighted projective space with weights $\bfw=(w_1,w_2)$. Without loss of generality we assume that $w_1\geq w_2$ and that they are relatively prime, so equality implies $\bfw=(1,1)$. 

Now the $(l_1,l_2)$-join $M_{l_1,l_2,\bfw}=S^{2p+1}\star_{l_1,l_2}S^3_\bfw$ can be defined by the commutative diagram
\begin{equation}\label{s2comdia}
\begin{matrix}  S^{2p+1}\times S^3_\bfw &&& \\
                          &\searrow\pi_L && \\
                          \decdnar{\pi\times\pi_\bfw} && S^{2p+1}\star_{l_1,l_2}S^3_\bfw &\\
                          &\swarrow\pi_1 && \\
                         \bbc\bbp^p\times\bbc\bbp^1[\bfw] &&& 
\end{matrix}
\end{equation}
where $\pi_L$ is the quotient projection by the $S^1$ action generated by the vector field 
$$L_{l_1,l_2}=\frac{1}{2l_1}\xi-\frac{1}{2l_2}\xi_\bfw.$$
Here $\xi$ is the infinitesimal generator of the standard $S^1$ action on $S^{2p+1}$ while $\xi_\bfw$ is the infinitesimal generator of the weighted $S^1$ action on $S^3$. They are also the Reeb vector fields of the natural Sasakian structures on $S^{2p+1}$ and $S^3_\bfw$, respectively. The join $S^{2p+1}\star_{l_1,l_2}S^3_\bfw$ then has an induced Sasakian structure such that $\pi_1$ is the quotient projection of the $S^1$ action generated by its Reeb vector field. Note that $l_1,l_2$ are positive integers which we assume for convenience are relatively prime. In this case we obtain an infinite sequence of simply connected $(2p+3)$-dimensional Sasakian manifolds $M_{l_1,l_2,\bfw}=S^{2p+1}\star_{l_1,l_2}S^3_\bfw$. 

\subsection{The Contact Structure and the Sasaki Cone}
The Sasakian structure on $M_{l_1,l_2,\bfw}$ is that induced by the 1-form $l_1\eta+l_2\eta_\bfw$ on $S^{2p+1}\times S^3$ where $\eta$ is the standard contact 1-form on $S^{2p+1}$ and $\eta_\bfw$ is the weighted contact 1-form on $S^3$, cf. \cite{BG05} for complete definitions and further discussion. We denote the induced contact 1-form on  $M_{l_1,l_2,\bfw}$ by $\eta_{l_1,l_2}$, and its Reeb vector field by $\xi_{l_1,l_2}$. The corresponding contact bundle is denoted by $\cald_{l_1,l_2,\bfw}=\ker\eta_{l_1,l_2}$. Note that these contact structures are all toric of Reeb type \cite{BG00b}, and they are classified by certain rational polyhedral cones \cite{Ler02a}. The Sasaki automorphism group $\gA\gu\gt(M_{l_1,l_2,\bfw})$ is the centralizer of the circle subgroup generated by $L_{l_1,l_2}$ in the product Sasaki automorphism group $U(p+1)\times T^2$ of $S^{2p+1}\times S^3_\bfw$. Thus, the connected component of the Sasaki automorphism group is $\gA\gu\gt_0(M_{l_1,l_2,\bfw})\approx U(p)\times T^2$. So a maximal torus $T^{p+2}$ has dimension $p+2$, and we denote its Lie algebra by $\gt_{p+2}$. There is an exact sequence
$$0\ra{1.8} \gt_L\ra{1.8} \gt_{p+3}\ra{1.8}\gt_{p+2}\ra{1.8} 0$$
where $\gt_L$ is the one-dimensional Lie algebra generated by $L_{l_1,l_2}$. Note that the Sasaki automorphism group of the join provides a splitting
\begin{equation}\label{gtsplit}
\gt_{p+2}=\gt_p\oplus \gt_2.
\end{equation}

The unreduced Sasaki cone $\gt^+_{p+2}$ \cite{BGS06} is the subset of $\gt_{p+2}$ that satisfies $\eta_{l_1,l_2}(X)>0$ everywhere on $M_{l_1,l_2,\bfw}$. This Sasaki cone is an invariant of the underlying contact structure and can be thought of, up to certain permutations, as the moduli space of Sasakian structures compatible with the contact structure $\cald_{l_1,l_2,\bfw}$. With the choice splitting \eqref{gtsplit} we can write the Sasaki cone of $M_{l_1,l_2,\bfw}$ as
\begin{equation}\label{gt+split}
\gt_{p+2}^+=\gt_p^{\geq 0}\oplus \gt_2^+.
\end{equation}
In this paper we are mostly concerned with the 2-dimensional sub cone $\gt^+_2$ which has been called the $\bfw$-Sasaki cone \cite{BoTo13b}, and henceforth will be denoted by $\gt^+_\bfw$. We remark that in the case that $\bfw=(1,1)$ we still have the permutation group on 2 letters, $\grS_2$, left as a symmetry. For any quasi-regular ray in $\gt^+_\bfw$ the quotient orbifold is a log pair $(S_n,\grD)$ as described in Section \ref{torsec} above (see Theorem 3.7 in \cite{BoTo14a}).

\section{The Topology of $M_{l_1,l_2,\bfw}$}
We first remark that according to Proposition 7.6.7 of \cite{BG05} $M_{l_1,l_2,\bfw}$ can be written as a bundle $S^{2p+1}\times_{S^1}L(l_2;l_1w_1,l_1w_2)$ with fiber the lens space $L(l_2;l_1w_1,l_1w_2)$ over $\bbc\bbp^p$ associated to the the Hopf fibration as a principal $S^1$ bundle.

For the homotopy groups we find from the long exact homotopy sequence of the fibration $S^1\ra{1.5} S^{2p+1}\times S^3\fract{\pi_L}{\ra{1.5}} M_{l_1,l_2,\bfw}$ we easily obtain
\begin{equation}\label{homotopygps}
\pi_1(M_{l_1,l_2,\bfw})=0,\quad \pi_2(M_{l_1,l_2,\bfw})=\bbz, \quad \pi_i(M_{l_1,l_2,\bfw})=\pi_i(S^{2p+1})\oplus \pi_i(S^3)~\text{for $i>2$}.
\end{equation}
In particular, since $p>1$ we have $\pi_3(M_{l_1,l_2,\bfw})=\bbz$ and $\pi_4(M_{l_1,l_2,\bfw})=\bbz_2$.

\subsection{The First Chern Class}
The first Chern class of the contact bundle $\cald_{l_1,l_2,\bfw}$ can easily be computed \cite{BoTo13b,BoTo14a}, viz.
\begin{equation}\label{c1}
c_1(\cald_{l_1,l_2,\bfw})=\bigl(l_2(p+1)-l_1|\bfw|\bigr)\grg,
\end{equation}
where $\grg$ is chosen to be a positive generator of $H^2(M_{l_1,l_2,\bfw},\bbz)\approx \bbz$.
We then have the topological invariant 
\begin{equation}\label{w2}
w_2(M_{l_1,l_2,\bfw})=\grr\bigl((l_2(p+1)-l_1|\bfw|)\grg\bigr)
\end{equation}
where $\grr$ is the reduction mod 2 map. So for $p$ odd $M_{l_1,l_2,\bfw}$ is spin if and only if $l_1$ is even or $w_i$ are both odd, and in this case whether $M_{l_1,l_2,\bfw}$ is spin or not is independent of $l_2$.

\subsection{The Cohomology Ring}
The cohomology ring of a toric contact manifold of Reeb type can be computed in principle from its combinatorics \cite{Luo12}. However, the cohomology ring of our join $M_{l_1,l_2,\bfw}$ was computed explicitly in \cite{BoTo13b} which dealt with special values of $l_1$ and $l_2$. The proof which we outline here for completeness holds in general. 

\begin{theorem}\label{topcpr}
If $p>1$ the join $M_{l_1,l_2,\bfw}=S^{2p+1}\star_{l_1,l_2}S^3_\bfw$ has integral cohomology ring
given by
$$H^*(M_{l_1,l_2,\bfw},\bbz)\approx\bbz[x,y]/(w_1w_2l_1^2x^2,x^{p+1},x^2y,y^2)$$
where $x,y$ are classes of degree $2$ and $2p+1$, respectively.
\end{theorem}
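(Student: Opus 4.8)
The plan is to compute the cohomology of $M_{l_1,l_2,\bfw}$ via the Gysin/Leray spectral sequence of the circle bundle $S^1 \to S^{2p+1}\times S^3_\bfw \xrightarrow{\pi_L} M_{l_1,l_2,\bfw}$ described in the excerpt. Since the total space $S^{2p+1}\times S^3$ has cohomology $H^*(S^{2p+1}\times S^3,\bbz) \approx \bbz[a,b]/(a^2,b^2)$ with $a$ in degree $2p+1$ and $b$ in degree $3$, the Gysin sequence (or equivalently the spectral sequence of the associated $S^2$-Borel construction) reduces to understanding a single transgression/Euler-class map. First I would identify the Euler class $e$ of the bundle $\pi_L$ in $H^2(M_{l_1,l_2,\bfw},\bbz)$. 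By \eqref{homotopygps} we have $\pi_2(M_{l_1,l_2,\bfw})=\bbz$, so $H^2(M_{l_1,l_2,\bfw},\bbz)\approx \bbz$; let $x$ be the pullback class that will generate it, coming from the degree-2 generator on the product upstairs, and note $e$ is a multiple of $x$.

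The key is then to pin down the multiplicative structure, i.e. the relation in degree $4$. Upstairs the only degree-$4$ class interactions come from products, and the crucial point is that the Hopf generator of $H^2(S^3_\bfw)$—after accounting for the weights $\bfw=(w_1,w_2)$ and the join integers $(l_1,l_2)$—produces a relation of the form $w_1w_2 l_1^2 x^2 = 0$. I would extract this by a careful bookkeeping of the rational Euler class against the integral lattice: the weighted Hopf fibration contributes the factor $w_1 w_2$ (the orbifold/weighted projective space $\bbc\bbp^1[\bfw]$ has degree governed by $w_1w_2$), while the join scaling $L_{l_1,l_2}=\frac{1}{2l_1}\xi - \frac{1}{2l_2}\xi_\bfw$ contributes the $l_1^2$. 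The relations $x^{p+1}=0$ and $x^2 y = y^2 = 0$ then follow formally: $x^{p+1}=0$ descends from $a^2=0$ (equivalently from the fact that $\bbc\bbp^p$ truncates at degree $2p$), and $y$—the degree $2p+1$ class—squares to zero and is annihilated by $x^2$ because $y$ is essentially the image of the fundamental-type class of the $S^{2p+1}$ factor, which already satisfies these relations upstairs.

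The main obstacle will be the integrality of the degree-$4$ relation, namely showing the coefficient is exactly $w_1w_2 l_1^2$ and not merely this up to a unit or up to torsion ambiguity. The rational computation is routine, but tracking the precise integral generator requires knowing that $\grg$ is a \emph{primitive} generator of $H^2\approx\bbz$ and that the Euler class $e$ equals $l_2(p+1)$ times, or an appropriate multiple of, this generator—this is exactly the sort of normalization that feeds into \eqref{c1}. Concretely I would compute the image of the transgression $d$ on the $E_2$ page, identify $\ker$ and $\mathrm{coker}$, and verify that the self-intersection $x^2$ is a torsion class of order exactly $w_1 w_2 l_1^2$ by comparing with the known special cases computed in \cite{BoTo13b} and with the description of $M_{l_1,l_2,\bfw}$ as the lens-space bundle $S^{2p+1}\times_{S^1} L(l_2;l_1w_1,l_1w_2)$ recorded at the start of this section.

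Finally I would assemble the ring: with $x$ in degree $2$ subject to $x^{p+1}=0$ and $w_1w_2l_1^2 x^2 = 0$, together with $y$ in degree $2p+1$ subject to $y^2=0$ and $x^2 y = 0$, the additive ranks match those forced by \eqref{homotopygps} and Poincaré duality on the $(2p+3)$-manifold, giving the claimed isomorphism $H^*(M_{l_1,l_2,\bfw},\bbz)\approx \bbz[x,y]/(w_1w_2l_1^2x^2,x^{p+1},x^2y,y^2)$.
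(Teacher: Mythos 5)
Your overall strategy---computing $H^*(M_{l_1,l_2,\bfw})$ from the circle fibration defined by $\pi_L$, reducing everything to a single degree-$4$ relation, and deducing the remaining relations formally---is essentially the paper's route; the paper runs the Borel-construction spectral sequence of $M\times S^3_\bfw \to M_{l_1,l_2,\bfw}\to \mathsf{B}S^1$ rather than the Gysin sequence of $S^1\to S^{2p+1}\times S^3\to M_{l_1,l_2,\bfw}$, but these package the same information. However, there is a genuine gap at exactly the point you yourself flag as ``the main obstacle'': you never actually derive the integral coefficient $w_1w_2l_1^2$. The rational computation only shows the coefficient is nonzero, and the entire content of the theorem is torsion. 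Neither of your proposed fixes closes the gap. Comparison with \cite{BoTo13b} is circular: that computation was carried out for special values of $l_1,l_2$, and promoting it to all $(l_1,l_2,\bfw)$ requires precisely a naturality argument in the parameters, which is what remains to be proved. Likewise, the lens-space bundle description $S^{2p+1}\times_{S^1}L(l_2;l_1w_1,l_1w_2)$ trades one unknown for another: its Serre spectral sequence over $\bbc\bbp^p$ has fiber torsion $\bbz_{l_2}$ (which is \emph{coprime} to $w_1w_2l_1^2$) and its own undetermined differentials, so it does not independently pin down the order of $H^4$. What the paper does instead is factor the coefficient into two computable pieces: Lemma \ref{cporbcoh} gives $H^4_{orb}(\bbc\bbp^1[\bfw],\bbz)=\bbz_{w_1w_2}$ via Haefliger's classifying space, so the only differential of $S^3_\bfw\to \mathsf{B}\bbc\bbp^1[\bfw]\to\mathsf{B}S^1$ is $d_4(\gra)=w_1w_2\,s_2^2$; the comparison map $\psi$ of diagram \eqref{orbifibrationexactseq}, with $\psi^*s_2=-l_1s$, then converts this by naturality into $d_4(\gra)=w_1w_2l_1^2\,s^2$ integrally. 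Without an ingredient of this kind your argument does not go through.

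Two smaller problems. First, your normalization discussion conflates two different classes: the Euler class $e$ of the principal bundle $\pi_L$ is necessarily a \emph{primitive} generator of $H^2(M_{l_1,l_2,\bfw},\bbz)\approx\bbz$---the Gysin sequence itself forces $\cup\,e\colon H^0\to H^2$ to be onto, since $H^2(S^{2p+1}\times S^3)=0$---whereas $\bigl(l_2(p+1)-l_1|\bfw|\bigr)\grg$ in \eqref{c1} is $c_1(\cald_{l_1,l_2,\bfw})$, a different class that plays no role in this computation. Second, the relation $x^{p+1}=0$ is not purely formal: it amounts to $H^{2p+2}(M_{l_1,l_2,\bfw})=0$, i.e., to surjectivity of the connecting map $H^{2p+1}(S^{2p+1}\times S^3)\to H^{2p}(M_{l_1,l_2,\bfw})\approx\bbz_{w_1w_2l_1^2}$ (in the paper's picture, to the transgression of the $(2p+1)$-class hitting $l_2^{p+1}s^{p+1}$), and this requires $\gcd(l_2,l_1w_1w_2)=1$ from the join construction---a hypothesis your sketch never invokes.
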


\begin{proof}
Our approach uses the spectral sequence for the fibration 
$$M\times S^3_\bfw \ra{2.6} M_{l_1,l_2,\bfw}\ra{2.6}
\mathsf{B}S^1$$
employed in \cite{WaZi90} and generalized to the orbifold setting in \cite{BG00a} (see also Section 7.6.2 of \cite{BG05}).
First from \cite{BoTo13b} we have
\begin{lemma}\label{cporbcoh}
For $w_1$ and $w_2$ relatively prime positive integers we have
$$H^r_{orb}(\bbc\bbp^1[\bfw],\bbz)=H^r( \mathsf{B}\bbc\bbp^1[\bfw],\bbz)= \begin{cases}
                    \bbz &\text{for $r=0,2$,}\\                  
                    \bbz_{w_1w_2} &\text{for $r>2$ even,}\\
                     0 &\text{for $r$ odd.}
                     \end{cases}$$           
\end{lemma}
Here $\mathsf{B}G$ is the classifying space of a group $G$ or Haefliger's classifying space \cite{Hae84} of an orbifold if $G$ is an orbifold.
Then we consider the commutative diagram of fibrations
\begin{equation}\label{orbifibrationexactseq}
\begin{matrix}M\times S^3_\bfw &\ra{2.6} &M_{l_1,l_2,\bfw}&\ra{2.6}
&\mathsf{B}S^1 \\
\decdnar{=}&&\decdnar{}&&\decdnar{\psi}\\
M\times S^3_\bfw&\ra{2.6} & N\times\mathsf{B}\bbc\bbp^1[\bfw]&\ra{2.6}
&\mathsf{B}S^1\times \mathsf{B}S^1\, 
\end{matrix} \qquad \qquad
\end{equation}
Now the map $\psi$ of Diagram (\ref{orbifibrationexactseq}) is that induced by the inclusion $e^{i\theta}\mapsto (e^{il_2\theta},e^{-il_1\theta})$. So noting 
$$H^*(\mathsf{B}S^1\times \mathsf{B}S^1,\bbz)=\bbz[s_1,s_2]$$ 
we see that $\psi^*s_1=l_2s$ and $\psi^*s_2=-l_1s$. This together with the fact that the only differential in Leray-Serre spectral sequence of the fibration
$$S^3_\bfw \ra{2.6} \mathsf{B}\bbc\bbp^1[\bfw]\ra{2.6} \mathsf{B}S^1$$
is $d_4:E^{0,3}_2\ra{1.6} E^{4,0}_2$ and those induced by naturality. Using Lemma \ref{cporbcoh} this gives $d_4(\gra)=w_1w_2l_1^2s^2$ in the Leray-Serre spectral sequence of the top fibration in Diagram (\ref{orbifibrationexactseq}) from which the cohomology ring follows.
\end{proof}

So an important homotopy invariant of these manifolds is the order $|H^4(M_{l_1,l_2,\bfw},\bbz)|=w_1w_2l_1^2$, and the integral first Pontrjagin class $p_1(M_{l_1,l_2,\bfw})\in H^4(M_{l_1,l_2,\bfw},\bbz)$ is a homeomorphism invariant. So Theorem \ref{topcpr} has the following
\begin{corollary}\label{ratcoh}
For $p>1$ we have
$$H^*(M_{l_1,l_2,\bfw},\bbq)\approx \bbq[x,y]/(x^2,y^2)\approx H^*(S^2\times S^{2p+1},\bbq).$$
In particular, all the rational (real) Pontrjagin classes of $M_{l_1,l_2,\bfw}$ vanish.
\end{corollary}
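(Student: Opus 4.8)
The plan is to read off the rational ring directly from the integral computation of Theorem \ref{topcpr}, and then to locate the Pontrjagin classes by inspecting the degrees in which that rational cohomology is supported.

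First I would apply the universal coefficient theorem (equivalently, tensor the integral presentation of Theorem \ref{topcpr} with $\bbq$, which is exact since $\bbq$ is flat over $\bbz$, and hence respects the ring structure). Under $-\otimes_\bbz\bbq$ the coefficient $w_1w_2l_1^2$ becomes a unit, so the relation $w_1w_2l_1^2x^2=0$ rationalizes to $x^2=0$. Once $x^2=0$ holds, the remaining relations $x^{p+1}=0$ and $x^2y=0$ are automatic and therefore redundant, leaving exactly
\[
H^*(M_{l_1,l_2,\bfw},\bbq)\approx\bbq[x,y]/(x^2,y^2),
\]
with $x,y$ in degrees $2$ and $2p+1$ respectively.

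Next I would identify this with the product. By the K\"unneth theorem $H^*(S^2\times S^{2p+1},\bbq)\cong H^*(S^2,\bbq)\otimes H^*(S^{2p+1},\bbq)\cong\bbq[x]/(x^2)\otimes\bbq[y]/(y^2)$, with generators in degrees $2$ and $2p+1$; this is the same graded ring, which establishes the displayed isomorphism. The same argument with $\bbr$ coefficients handles the real case verbatim.

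Finally, for the Pontrjagin classes I would observe that the rational cohomology just computed is nonzero only in degrees $0,2,2p+1,2p+3$, spanned by $1,x,y,xy$. Since $p>1$ the two odd generators sit in degrees $2p+1\ge 5$ and $2p+3$, so the only positive even degree carrying cohomology is $2$. Consequently $H^{4k}(M_{l_1,l_2,\bfw},\bbq)=0$ for every $k\ge 1$, since no multiple of $4$ occurs among $\{2,2p+1,2p+3\}$. As each rational (real) Pontrjagin class $p_k$ lives in $H^{4k}(M_{l_1,l_2,\bfw},\bbq)$, all of them vanish. There is essentially no obstacle here: all the content is carried by Theorem \ref{topcpr}, and the only point needing even a moment's care is that neither $2p+1$ nor $2p+3$ is divisible by $4$—but both are odd, so this is immediate.
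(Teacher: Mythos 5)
Your proposal is correct and is essentially the paper's own (implicit) argument: the corollary is stated as an immediate consequence of Theorem \ref{topcpr}, obtained by tensoring the integral ring with $\bbq$ so that the torsion relation $w_1w_2l_1^2x^2=0$ becomes $x^2=0$, and then noting that the rational cohomology, supported in degrees $0,2,2p+1,2p+3$, vanishes in all degrees $4k$ with $k\geq 1$, killing the rational and real Pontrjagin classes. Your added care about right-exactness of $-\otimes_\bbz\bbq$, the universal coefficient theorem, and the redundancy of $x^{p+1}=0$ and $x^2y=0$ merely fills in details the paper leaves unstated.
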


We are now ready for:
\begin{theorem}\label{findiffeo}
For $p\geq 1,l_1$ and $\bfw$ fixed, there are only finitely many diffeomorphism types among the manifolds $M_{l_1,l_2,\bfw}$.
\end{theorem}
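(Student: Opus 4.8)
The plan is to exploit the fact that, by Theorem \ref{topcpr}, the integral cohomology ring of $M_{l_1,l_2,\bfw}$ is \emph{independent} of $l_2$: with $p,l_1$ and $\bfw$ fixed, the nonzero groups are $H^0\cong H^2\cong H^{2p+1}\cong H^{2p+3}\cong\bbz$ together with $H^{2k}\cong\bbz_{w_1w_2l_1^2}$ for $2\le k\le p$, all remaining groups vanishing. In particular every torsion subgroup is the \emph{fixed} finite group $\bbz_{w_1w_2l_1^2}$. I dispose of $p=1$ at once: there the manifolds are all diffeomorphic to $S^2\times S^3$ or to $S^2\tilde{\times}S^3$, so there are only two types. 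Thus I may assume $p>1$.

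Next I would record that every homeomorphism (hence diffeomorphism) invariant assembled from this data ranges over a finite set. The integral Pontrjagin classes $p_i\in H^{4i}(M_{l_1,l_2,\bfw},\bbz)$ lie in finite groups (each $H^{4i}$ of positive degree is $\bbz_{w_1w_2l_1^2}$ or $0$), and by Corollary \ref{ratcoh} they are rationally trivial; the Stiefel--Whitney classes lie in the finite groups $H^*(\cdot,\bbz_2)$, with $w_2$ taking only the two values permitted by \eqref{w2}; and the linking forms on the torsion subgroups take finitely many values. So, as $l_2$ ranges over the positive integers, the full tangential and linking data varies over a finite set, and the rational Pontrjagin classes are identically zero.

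The finiteness is then extracted in two standard steps. First, on a fixed closed topological manifold of dimension $2p+3\ge 5$ there are only finitely many smooth structures, since these are classified up to concordance by $[M,\mathrm{Top}/O]$ and each homotopy group $\pi_i(\mathrm{Top}/O)$ (equal to the finite group of homotopy spheres $\Theta_i$ for $i\ge 5$) is finite; so it suffices to bound the number of homeomorphism types. Second, by Corollary \ref{ratcoh} we have $H^{4k}(M_{l_1,l_2,\bfw},\bbq)=0$ for all $k\ge 1$, so the set of topological normal invariants $[M,G/\mathrm{Top}]$ has no free part and is therefore finite; combined with the finite simply connected surgery obstruction groups $L_n(e)\in\{\bbz,0,\bbz_2,0\}$, the topological surgery exact sequence shows that each fixed homotopy type contains only finitely many homeomorphism types.

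The remaining and genuinely delicate point—which I expect to be the main obstacle—is to bound the number of \emph{homotopy} types occurring as $l_2$ varies; this cannot be sidestepped, since Theorem \ref{Kruthm} shows that in dimension $7$ the homotopy type really does change with $l_2$. I would argue that the homotopy type of a simply connected Poincar\'e complex is determined by its cohomology ring together with its Postnikov $k$-invariants and the linking forms refining the ring structure, and that for $M_{l_1,l_2,\bfw}$ all of these take values in the fixed finite groups $\bbz_{w_1w_2l_1^2}$ and $\bbz_2$ (the rational homotopy type being forced to that of $S^2\times S^{2p+1}$ by Corollary \ref{ratcoh}); hence only finitely many homotopy types arise. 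Verifying that these $k$-invariants factor through finite groups once the cohomology ring and the vanishing rational data are fixed is the technical heart of the argument; in the explicitly classified low dimensions it is transparent (dimension $5$ via Barden's classification, dimension $7$ via the Kreck--Stolz invariants), which also reconfirms the $p=1$ case.
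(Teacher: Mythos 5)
Your scaffolding is sound in two of its three steps, but the step you yourself flag as ``the technical heart''---finiteness of homotopy types as $l_2$ varies---is a genuine gap, and your proposed mechanism for closing it does not work as stated. You argue that the homotopy type is controlled by the cohomology ring, the linking form, and the Postnikov $k$-invariants, and that all of these ``take values in the fixed finite groups $\bbz_{w_1w_2l_1^2}$ and $\bbz_2$.'' That last claim is false for the $k$-invariants: they live in groups of the form $H^{m+1}(P_{m-1};\pi_m)$, where $P_{m-1}$ is a Postnikov stage and $\pi_m\cong\pi_m(S^{2p+1})\oplus\pi_m(S^3)$ by \eqref{homotopygps}. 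These coefficient groups have free summands (in degrees $3$ and $2p+1$, for instance), and the cohomology of the Postnikov stages is far from finite, so the $k$-invariants do not a priori range over a finite set. What is true---and what must be proved---is that once the \emph{rational} homotopy type is pinned down (which requires the minimal-model computation, not merely the rational ring of Corollary \ref{ratcoh}: intrinsic formality has to be checked), the residual integral ambiguity in the $k$-invariants is finite because the torsion is bounded by $w_1w_2l_1^2$. Making this precise is exactly the content of Sullivan's finiteness theorem (Theorem 13.1 of \cite{Sul77}) and of Kreck--Triantafillou \cite{KrTr91}, and it is what the paper invokes: the paper's proof, following Wang--Ziller \cite{WaZi90}, computes the minimal model $(\grL(a,b),d)\otimes(\grL(c),0)$ with $db=a^2$, concludes formality, and then cites \cite{Sul77} together with the vanishing of the rational Pontrjagin classes (Corollary \ref{ratcoh}). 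So your proposal reconstructs the internal skeleton of the cited theorem but leaves its decisive step unproved, whereas the paper closes it by citation.

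Two smaller points. First, your surgery step contains an error as written: the simply connected surgery obstruction groups are $L_n(e)\in\{\bbz,0,\bbz_2,0\}$, and $\bbz$ is not finite. The conclusion survives here because $\dim M=2p+3$ is odd, so $L_{2p+3}(e)=0$, and for simply connected manifolds the action of $L_{2p+4}(e)$ on the topological structure set is trivial (the surgery obstruction map out of $[\Sigma M_+,G/\mathrm{Top}]$ is onto via the top cell); with $H^{4k}(M;\bbq)=0$ for $k\geq 1$ the normal invariant set $[M,G/\mathrm{Top}]$ is finite, so finitely many homeomorphism types per homotopy type follows---but you should say this correctly rather than call $\bbz$ finite. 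Second, your smoothing step (finiteness of $[M,\mathrm{Top}/O]$ since all $\pi_i(\mathrm{Top}/O)$ are finite) and your $p=1$ reduction are both correct and unproblematic.
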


\begin{proof}
Our argument is essentially that of Wang and Ziller \cite{WaZi90}.
First we note that it follows from the first statement of Corollary \ref{ratcoh} that the Sullivan minimal model \cite{Sul75,Sul77,FeOpTa08} for $M_{l_1,l_2,\bfw}$ is $(\grL(a,b),d)\otimes(\grL(c),0)$ where $db=a^2$. So $M_{l_1,l_2,\bfw}$ is formal. But then the result follows from Theorem 13.1 of \cite{Sul77} using the second statement of Corollary \ref{ratcoh}. See also \cite{KrTr91} for a simplified approach.
\end{proof}

For $p=1$ it is well known that there are precisely two diffeomorphism types, the trivial bundle $S^2\times S^3$, and the non-trivial $S^3$-bundle over $S^2$, denoted by $S^2\tilde{\times} S^3$. We have $S^2\times S^3$ if $l_1|\bfw|$ is even, and $S^2\tilde{\times} S^3$ if $l_1|\bfw|$ is odd. Furthermore, from the first Chern class \eqref{c1} we see that there are infinitely many inequivalent contact structures in this case as well. Special cases of this case are treated from somewhat different viewpoints in \cite{Boy10b} and \cite{BoPa10}.  The former deals with only regular Reeb vector fields, whereas, the latter deals with the almost regular case as well (see Section 3.4 of \cite{BoTo14a} for a definition of almost regular). The bouquet phenomenon, which occurs owing to the existence of different Hirzebruch surfaces associated with a given symplectic form, is discussed in Section 5.1 of \cite{BoPa10}. In particular, we refer to Corollaries 5.3, 5.5 and Examples 5.4 and 5.6 of that reference.

\subsection{The Homogeneous Case: Wang-Ziller Manifolds}
The homogeneous case, that is when $\bfw=(1,1)$, was studied extensively by Wang and Ziller \cite{WaZi90}. In particular, they showed that these manifolds all admit Einstein metrics. Our interest here, however, is in the Sasakian geometry of the manifolds $M_{l_1,l_2,(1,1)}$ which in the notation of \cite{WaZi90} are $M^{1,p}_{l_2,l_1}$. The intersection between the Sasaki and Einstein geometries, that is, the Sasaki-Einstein case was discussed in \cite{BG00a}. Here we consider the general homogeneous Sasakian case, that is a homogeneous manifold that is homogeneous under a Lie group that leaves the Sasakian structure invariant. As mentioned previously we assume $p>1$. We consider the product homogeneous manifold 
$$S^{2p+1}\times S^3=\frac{U(p+1)\times U(2)}{U(p)\times U(1)}$$ 
noticing that $U(p+1)$ is the Sasakian automorphism group of the standard Sasakian structure on $S^{2p+1}$. When $\bfw=(1,1)$ the $S^1$-action defined by the projection map $\pi_L$ of Diagram \eqref{s2comdia} is central in $U(p+1)\times U(2)$. Thus, for every pair of relatively prime positive integers $(l_1,l_2)$ the manifolds $M^{1,p}_{l_2,l_1}=M_{l_1,l_2,(1,1)}=S^{2p+1}\star_{l_1,l_2}S^3$ can be written as a homogeneous Sasakian manifold of the form
$$\frac{SU(p+1)\times SU(2)}{SU(p)\times U(1)}.$$

Note that in the homogeneous case $l_1$ is a homotopy invariant, and when $p>1$ the cohomology ring reduces to
$$H^*(M^{1,p}_{l_2,l_1},\bbz)\approx\bbz[x,y]/(l_1^2x^2,x^{p+1},x^2y,y^2).$$

We have the following result of Wang and Ziller:
\begin{theorem}\label{wzthm}\cite{WaZi90}
For $p>1$ and $l_1$ fixed there are only finitely many diffeomorphism types among the manifolds $M^{1,p}_{l_2,l_1}$. 
Moreover,
\begin{enumerate}
\item $M^{1,p}_{l_2,1}$ is diffeomorphic to $S^2\times S^{2p+1}$ if $p$ is odd or if $p$ is even and $l_2$ is even; whereas, if $p$ is even and $l_2$ is odd, it is diffeomorphic to the non-trivial $S^{2p+1}$-bundle over $S^2$.
\item $M^{1,p}_{l_2,2}$ is diffeomorphic to a non-trivial $\bbr\bbp^{2p+1}$-bundle over $S^2$ which for $p$ odd is independent of $l_2$, and for $p$ even depends on the class of $l_2\mod 4$.
\end{enumerate}
\end{theorem}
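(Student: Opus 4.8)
The plan is to get the finiteness statement for free from Theorem \ref{findiffeo} (taking $\bfw=(1,1)$ and $l_1$ fixed, so that the $M^{1,p}_{l_2,l_1}=M_{l_1,l_2,(1,1)}$ are a subfamily of the $M_{l_1,l_2,\bfw}$), and to obtain the explicit identifications (1) and (2) by realizing each manifold as a fibre bundle over $S^2$ and reading off its clutching invariant. First I would recall that $M^{1,p}_{l_2,l_1}$ is the principal circle bundle over $\bbc\bbp^1\times\bbc\bbp^p$ with Euler class $l_2\alpha+l_1\beta$, where $\alpha,\beta$ generate $H^2(\bbc\bbp^1,\bbz)$ and $H^2(\bbc\bbp^p,\bbz)$. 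Composing with the projection to the first factor exhibits it as a smooth fibre bundle over $S^2=\bbc\bbp^1$ whose fibre is the circle bundle over $\bbc\bbp^p$ with Euler class $l_1\beta$, namely the lens space $S^{2p+1}/\bbz_{l_1}$: this is $S^{2p+1}$ for $l_1=1$ and $\bbr\bbp^{2p+1}$ for $l_1=2$. In both cases fibre rotation reduces the structure group to a compact group acting linearly, so the bundle is classified by an element of the $\pi_1$ of that group, and the $l_2$-dependence enters as the $l_2$-fold multiple of the class of the generating fibre-rotation loop.

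For (1) the structure group is $SO(2p+2)$, with $\pi_1(SO(2p+2))=\bbz_2$, so there are exactly two linear $S^{2p+1}$-bundles over $S^2$: the trivial one $S^2\times S^{2p+1}$, which is spin, and the twisted one $S^2\tilde{\times}S^{2p+1}$, which is not, and as manifolds they are distinguished by $w_2$. It therefore suffices to compute $w_2(M^{1,p}_{l_2,1})$, which by \eqref{w2} with $\bfw=(1,1)$ and $l_1=1$ equals $\grr(l_2(p+1)\grg)$. This vanishes precisely when $l_2(p+1)$ is even, i.e. when $p$ is odd, or when $p$ is even and $l_2$ is even; in those cases $M^{1,p}_{l_2,1}\cong S^2\times S^{2p+1}$, and otherwise it is the non-trivial bundle, as claimed. (Here, since $\gcd(1,l_2)=1$ imposes no parity constraint, both parities of $l_2$ genuinely occur.)

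For (2) the fibre is $\bbr\bbp^{2p+1}$, whose isometry group has identity component $PSO(2p+2)=SO(2p+2)/\{\pm I\}$, so the classifying group is $\pi_1(PSO(2p+2))$, which (as $\mathrm{Spin}(2p+2)$ is the universal cover) is the centre $Z(\mathrm{Spin}(2p+2))$. Writing $n=p+1$, this centre is $\bbz_4$ when $n$ is odd ($p$ even) and $\bbz_2\oplus\bbz_2$ when $n$ is even ($p$ odd). The generating fibre-rotation loop lifts through $\mathrm{Spin}(2p+2)$ to the chirality element $\omega=e_1e_2\cdots e_{2p+2}$, which satisfies $\omega^2=(-1)^n$; hence $\omega$ has order $4$ for $p$ even and order $2$ for $p$ odd. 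The clutching class of $M^{1,p}_{l_2,2}$ is then $\omega^{l_2}$. The key point is that the standing convention $\gcd(l_1,l_2)=1$ now forces $l_2$ to be odd, so $\omega^{l_2}\neq 1$ and the bundle is non-trivial. For $p$ odd, $\omega$ has order $2$ and $\omega^{l_2}=\omega$ for every odd $l_2$, so the bundle is independent of $l_2$; for $p$ even, $\omega$ has order $4$ and $\omega^{l_2}$ equals $\omega$ or $\omega^{-1}$ according as $l_2\equiv 1$ or $3\pmod 4$, so it depends on $l_2\bmod 4$.

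The hard part will be the part-(2) clutching computation: correctly identifying the lift of the fibre-rotation loop with $\omega$ in $Z(\mathrm{Spin}(2p+2))$ and tracking its order through the parity of $n=p+1$, since it is exactly this parity that toggles the centre between $\bbz_4$ and $\bbz_2\oplus\bbz_2$. A secondary but genuine difficulty, needed to give force to the phrase ``depends on $l_2\bmod 4$'' in the $p$ even case, is to show that the classes $\omega$ and $\omega^{-1}$ actually produce non-diffeomorphic total spaces rather than merely non-isomorphic bundles; this requires a diffeomorphism invariant sensitive to the $\bbz_4$, for instance the linking form on $H^4$, whose order $w_1w_2l_1^2=4$ is already visible in Theorem \ref{topcpr}, or a suitable secondary Pontrjagin-type invariant. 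By contrast, the ``independent of $l_2$'' assertions follow formally once the clutching class has been pinned down.
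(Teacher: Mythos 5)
The paper gives no proof of its own here --- Theorem \ref{wzthm} is simply quoted from \cite{WaZi90} --- and your argument is essentially the classical Wang--Ziller one: finiteness from the Sullivan-theory argument (Theorem \ref{findiffeo}, whose proof the paper itself notes is Wang and Ziller's), and the explicit identifications via the associated-bundle description of $M^{1,p}_{l_2,l_1}$ as $S^3\times_{S^1}\bigl(S^{2p+1}/\bbz_{l_1}\bigr)$ over $\bbc\bbp^1$ with clutching class in $\pi_1$ of the structure group. Your computations in parts (1) and (2) are correct: for $l_1=1$ the bundle is linear with clutching in $\pi_1(SO(2p+2))=\bbz_2$, and reading off the answer from $w_2$ via \eqref{w2} is legitimate since the two linear $S^{2p+1}$-bundles over $S^2$ are indeed separated by $w_2$ of the total space; for $l_1=2$ the coprimality forces $l_2$ odd, the fibre-rotation loop does lift to the chirality element $\omega\in Z(\mathrm{Spin}(2p+2))=\pi_1(PSO(2p+2))$ with $\omega^2=(-1)^{p+1}$, and the clutching class $\omega^{l_2}$ is never trivial, is constant for $p$ odd, and is $\omega^{\pm 1}$ according to $l_2\bmod 4$ for $p$ even.

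The genuine problem is your final paragraph. Statement (2) asserts that the bundle is \emph{determined by} $l_2\bmod 4$; it does not claim that $l_2\equiv 1$ and $l_2\equiv 3\pmod 4$ yield non-diffeomorphic total spaces, and your plan to prove that is both unnecessary and, with the invariants you name, impossible. Pulling the bundle back along a reflection of $S^2$ replaces the clutching class $c$ by $-c$ while leaving the total space diffeomorphic (orientation-reversingly), so the $\omega$- and $\omega^{-1}$-bundles have diffeomorphic underlying unoriented manifolds and no unoriented invariant can separate them. Concretely, both of your candidates are constant on odd $l_2$: already in dimension $7$, \eqref{p1} gives $p_1=(3l_2^2-8)x^2\equiv 3x^2$ in $\bbz_4$ because $l_2^2\equiv 1\pmod 8$, and the linking form takes values in $\bbz_4^{*}/\{\pm 1\}$, which is the trivial group since $\bbz_4^{*}=\{\pm 1\}$. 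Separating the two residues mod $4$ is a statement about the \emph{oriented} classification --- for $p=2$ it is exactly what the Kreck--Stolz invariants in Theorem \ref{KSthm} detect --- and does not follow, and should not be expected to follow, from the clutching picture. Deleting that paragraph leaves a proof that matches the cited source's approach and is correct as the theorem is stated.
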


On the other hand as mentioned in the introduction, Wang and Ziller also prove that if $p\geq 2$ there are infinitely many homeomorphism types among the manifolds $M^{p,p}_{l_2,l_1}$ that have the same cohomology ring.  These, however, are not directly amenable to our admissible construction.

\section{The Seven Dimensional Case}
In dimension 7 there is more topological information available through the work of Kreck and Stolz \cite{KS88} among others \cite{Kru97,Kru05,Esc05}.

\subsection{The Homogeneous Case}
In the 7-dimensional case Kreck and Stolz have given a homeomorphism and diffeomorphism classification of these homogeneous manifolds \cite{KS88}. It is clear from Theorem \ref{topcpr} that $l_1$ is a homotopy invariant. Indeed, $l_1^2$ is the order of $H^4$. Furthermore, it follows from \eqref{w2} that $M_{l_1,l_2}=M^{7}_{l_1,l_2,(1,1)}$ is spin if and only if $l_2$ is even. For our case,

\begin{theorem}\label{KSthm}\cite{KS88}
The 7-manifolds $M^{1,2}_{l'_2,l_1}$ and $M^{1,2}_{l_2,l_1}$ are homeomorphic if and only if $l'_2\equiv l_2 \mod 2l_1^2$ if $l_1$ is odd or divisible by $4$. If $l_1$ is even but not divisible by $4$, then they are homeomorphic if and only if $l'_2\equiv l_2\mod l_1^2$.

$M^{1,2}_{l'_2,l_1}$ and $M^{1,2}_{l_2,l_1}$ are diffeomorphic if and only if $l'_2\equiv l_2\mod 2^{\grl_2(l_1)}7^{\grl_7(l_1)}l_1^2$, where
\begin{eqnarray*}
\grl_2(l_1)=&\begin{cases} 0 &\text{for $l_1=2,6\mod 8$} \\
                                         1 &\text{for $l_1=1,7\mod 8$} \\
                                         2 &\text{for $l_1=3,5\mod 8$} \\
                                         3 &\text{for $l_1=0,4\mod 8$} 
                                         \end{cases} \\
\grl_7(l_1)=&\begin{cases} 0 &\text{for $l_1=1,2,5,6\mod 7$} \\
                                           1 &\text{for $l_1=0,3,4\mod 7$}
                                           \end{cases}
\end{eqnarray*}
\end{theorem}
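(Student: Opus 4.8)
The plan is to place these manifolds in the classification scheme of Kreck and Stolz, which classifies closed simply connected $7$-manifolds $M$ with $H^2(M,\bbz)=\bbz$, $H^4(M,\bbz)$ finite cyclic, and the remaining cohomology fixed by Poincar\'e duality, by means of three $\bbq/\bbz$-valued invariants $s_1,s_2,s_3$. By Theorem \ref{topcpr} with $p=2$ and $\bfw=(1,1)$ we have $H^2(M^{1,2}_{l_2,l_1},\bbz)=\bbz\langle x\rangle$ and $H^4(M^{1,2}_{l_2,l_1},\bbz)=\bbz/l_1^2$ generated by $x^2$, so the manifolds lie in the Kreck--Stolz class; the parity of $l_2$ decides, via \eqref{w2}, whether one works with the spin or the $\mathrm{spin}^c$ version of the invariants. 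First I would recall the precise definition of $s_1,s_2,s_3$: each is obtained by choosing a compact coboundary $W^8$ with $\partial W=M$, extending the relevant degree-two and spin (or $\mathrm{spin}^c$) data over $W$, and evaluating a fixed rational combination of characteristic numbers of $(W,\partial W)$ modulo $\bbz$. Independence of the choice of $W$ follows from the Atiyah--Singer index theorem: the difference between two choices is the evaluation of the same combination on a closed $8$-manifold, which the index theorem forces to be an integer. The structural facts I would quote are that $s_2,s_3$ are \emph{homeomorphism} invariants while $s_1$ (an Eells--Kuiper type invariant) is only a \emph{diffeomorphism} invariant, and correspondingly that $M$ and $M'$ are homeomorphic iff $s_2,s_3$ agree and diffeomorphic iff all three $s_i$ agree.

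Second, I would produce an explicit coboundary. Since $M^{1,2}_{l_2,l_1}$ is the total space of the principal $S^1$-bundle over $\bbc\bbp^2\times\bbc\bbp^1$ with Euler class $l_2 a-l_1 b$ (where $a,b$ generate $H^2$ of the two factors), the associated disk bundle $W$ of the corresponding complex line bundle is a natural coboundary, spin or $\mathrm{spin}^c$ according to the parity of $l_2$. On $W$ the tangential data is computed from $T(\bbc\bbp^2\times\bbc\bbp^1)$ together with the Euler class, so $p_1(W)$, the signature, and the auxiliary degree-two class restricting to $x$ are all explicit polynomials in $l_1,l_2$. Evaluating the defining characteristic numbers then yields $s_1,s_2,s_3$ as explicit elements of $\bbq/\bbz$ depending on $l_1,l_2$ (and, with $l_1$ fixed, essentially affine-quadratic in $l_2$).

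Third, I would translate equality of invariants into the stated congruences. With $l_1$ fixed, setting $s_2(l'_2)=s_2(l_2)$ and $s_3(l'_2)=s_3(l_2)$ in $\bbq/\bbz$ reduces to a single congruence whose modulus is $l_1^2$ or $2l_1^2$ according to the $2$-adic valuation of the coefficients, which is exactly the dichotomy $l_1\equiv 2\bmod 4$ versus $l_1$ odd or $4\mid l_1$; this gives the homeomorphism statement. Imposing in addition $s_1(l'_2)=s_1(l_2)$ tightens the modulus by the denominator of the Eells--Kuiper contribution. Because that denominator involves only the primes $2$ and $7$ (reflecting $\Theta_7=bP_8=\bbz/28$, with $28=2^2\cdot 7$), the extra factor is $2^{\grl_2(l_1)}7^{\grl_7(l_1)}$, and tracking the $2$-adic and $7$-adic valuations of the relevant coefficient as a function of $l_1\bmod 8$ and $l_1\bmod 7$ produces the tables for $\grl_2$ and $\grl_7$, yielding the diffeomorphism statement.

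The main obstacle will be this last step: computing $s_1$ exactly and extracting its precise $2$-adic and $7$-adic denominators. Well-definedness and the homeomorphism invariants $s_2,s_3$ are comparatively routine once the coboundary is in hand, but the Eells--Kuiper invariant mixes $p_1^2$, $p_2$ and the signature with small denominators, and the arithmetic of deciding, case by case on $l_1\bmod 8$ and $l_1\bmod 7$, exactly which powers of $2$ and $7$ survive in the congruence is the delicate heart of the argument.
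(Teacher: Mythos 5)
Two preliminary remarks. First, the paper offers no proof of Theorem \ref{KSthm}: it is quoted, specialized to the manifolds $M^{1,2}_{l_2,l_1}$, from Theorem B of Kreck and Stolz \cite{KS88}, so your proposal has to be measured against that source rather than against anything in the text. Second, your overall strategy --- realize the manifolds as circle bundles over $\bbc\bbp^2\times\bbc\bbp^1$, bound them by the associated disk bundle, evaluate a fixed rational combination of characteristic numbers of the coboundary in $\bbq/\bbz$, and convert equality of invariants into congruences in $l_2$ with the $2$- and $7$-adic bookkeeping reflecting $bP_8\cong\bbz_{28}$ --- is indeed how the classification is carried out in the literature, and it is the same route the paper alludes to in the non-homogeneous case via Kruggel \cite{Kru05} and Escher \cite{Esc05}. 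Be aware, though, that the step you black-box (completeness of the invariants) is the real content of \cite{KS88}, where it is proved for this family directly by Kreck's modified surgery over the normal $2$-type; the abstract classification-by-$s_1,s_2,s_3$ theorem you propose to quote postdates that paper. Quoting it is legitimate given the attribution, but you should cite it as such rather than as a ``routine'' structural fact.

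There are two concrete errors. The more serious one: your homeomorphism criterion is wrong as stated. In the Kreck--Stolz classification, two such manifolds are (orientation-preserving) homeomorphic if and only if $s_2$ and $s_3$ agree \emph{and} $28\,s_1$ agrees; only the unmultiplied $s_1$ is merely a diffeomorphism invariant, while $28\,s_1$ is a topological (Eells--Kuiper type) invariant. Dropping the $28\,s_1$ condition would in general produce too weak a congruence for the homeomorphism statement; indeed, in the theorem the ratio of the diffeomorphism modulus $2^{\grl_2(l_1)}7^{\grl_7(l_1)}l_1^2$ to the homeomorphism modulus ($2l_1^2$ or $l_1^2$) always divides $28$, exactly the fingerprint of $bP_8\cong\bbz_{28}$ entering through the passage from $s_1$ to $28\,s_1$ --- this dichotomy cannot be recovered from $s_2,s_3$ alone. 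The second error is a convention slip in your coboundary: with Euler class $l_2a-l_1b$, where $a$ generates $H^2(\bbc\bbp^2)$, the Gysin sequence gives $H^4\cong\bbz_{l_2^2}$, contradicting the $H^4\cong\bbz_{l_1^2}$ that you correctly stated at the outset. The Euler class must carry $l_1$ on the $\bbc\bbp^2$ generator and $l_2$ on the $\bbc\bbp^1$ generator (this is what the Wang--Ziller notation $M^{1,2}_{l_2,l_1}$ encodes). The slip is fixable, but as written it propagates into every characteristic-number computation and hence into all three invariants.
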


We consider some of the homeomorphism and diffeomorphism types of homogeneous Sasakian 7-manifolds.

\begin{example}\label{l1=1}
If we take $l_1=1$ we see that if $l'_2\equiv l_2\mod 2$ then $M^{1,2}_{l'_2,1}$ and $M^{1,2}_{l_2,1}$ are diffeomorphic by Theorem \ref{wzthm}. Since $c_1(\cald_{1,l_2})=(3l_2-2)\grg$, this gives countably infinite sequences of deformation classes of homogeneous Sasakian structures belonging to inequivalent contact structures on the same smooth 7-manifold, namely $S^2\times S^5$ in the spin case ($l_2$ even)  and the non-trivial $S^5$-bundle over $S^2$ in the non-spin case ($l_2$ odd). 
\end{example}

\begin{example}\label{l1=2}
When $l_1=2$ the 7-manifolds $M^{1,2}_{l'_2,2}$ and $M^{1,2}_{l_2,2}$ are diffeomorphic if and only if $l'_2\equiv l_2\mod 4$. Furthermore, they are homeomorphic if and only if they are diffeomorphic. So there are precisely four diffeomorphism (homeomorphism) types and they are all homeomorphic to a non-trivial $\bbr\bbp^5$-bundle over $S^2$. Since $c_1(\cald_{2,l_2})=(3l_2-4)\grg$, each diffeomorphism (homeomorphism) type has a countably infinite number of deformation classes of homogeneous Sasakian structures belonging to inequivalent underlying contact structures.

\end{example}

\begin{example}
Taking $l_1=5$ we see that $M^{1,2}_{l'_2,5}$ and $M^{1,2}_{l_2,5}$ are homeomorphic if and only if $l'_2\equiv l_2\mod 50$, and they are diffeomorphic if and only if $l'_2\equiv l_2\mod 100$. So there are precisely $50$ homeomorphism types and $100$ diffeomorphism types, two diffeomorphism types for each homeomorphism type. Again each diffeomorphism type has a countably infinite sequence of deformation classes of homogeneous Sasakian structures with inequivalent underlying contact structures.
\end{example}

\subsection{The Non-Homogeneous Case}
Following \cite{Kru97} we consider the homotopy type of the 7-manifolds $M_{l_1,l_2,\bfw}^7$. Now from \eqref{w2} we have $w_2(M_{l_1,l_2,\bfw}^7)\equiv (3l_2-l_1|\bfw|)\mod 2$. If the order of $H^4$ is odd, then $w_1,w_2$ and $l_1$ are all odd in which case we have $w_2(M_{l_1,l_2,\bfw}^7)\equiv l_2 \mod 2$. The first Pontrjagin class, which is a homeomorphism invariant, is given by 
\begin{equation}\label{p1}
p_1(M^7_{l_1,l_2,\bfw})=\bigl(3l_2^2-l_1^2(w_1^2+w_2^2)\bigr)x^2\in H^4(M_{l_1,l_2,\bfw}^7,\bbz)\approx \bbz_{w_1w_2l_1^2},
\end{equation} 
and  the linking form is $s(M_{l_1,l_2,\bfw}^7)=l_2^3\in (\bbz_{w_1w_2l_1^2})^*/\{\pm 1\}$ where $\bbz^*_n$ denotes the group of units in $\bbz_n$. So Theorem 5.1 of \cite{Kru97} in our case becomes

\begin{theorem}\label{Kruthm} Let $l_1,w_1,w_2$ be odd. Then $M_{l'_1,l'_2,\bfw'}^7$ and $M_{l_1,l_2,\bfw}^7$ are homotopy equivalent if and only if
\begin{enumerate}
\item $w'_1w'_2(l'_1)^2=w_1w_2l_1^2$,
\item $l'_2\equiv l_2\mod 2$,
\item $(l'_1)^2|\bfw'|^2-l_1^2|\bfw|^2\equiv 0\mod 3\in \bbz_{w_1w_2l_1^2}$,
\item $(l'_2)^3\pm l_2^3 \equiv 0 \mod w_1w_2l_1^2.$
\end{enumerate}
\end{theorem}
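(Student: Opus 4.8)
The plan is to deduce the statement directly from Kruggel's homotopy classification of $7$-manifolds (Theorem 5.1 of \cite{Kru97}), so that the only work is to check that our join manifolds fall under the hypotheses of that theorem and then to translate its invariants into conditions (1)--(4). Setting $p=2$, Theorem \ref{topcpr} gives that $M^7_{l_1,l_2,\bfw}$ is simply connected with $H^2=\bbz\langle x\rangle$, $H^3=0$, $H^4\cong\bbz_{w_1w_2l_1^2}$ generated by $x^2$, and $H^5=\bbz\langle y\rangle$; this is precisely the cohomological profile to which Kruggel's classification applies. The homotopy invariants appearing there are the order of $H^4$, the torsion linking form on $H^4$, the second Stiefel--Whitney class, and the torsion first Pontrjagin class taken modulo its homotopy indeterminacy. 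All four have already been recorded in the paper: $|H^4|=w_1w_2l_1^2$, the linking form is $s=l_2^3\in(\bbz_{w_1w_2l_1^2})^*/\{\pm1\}$, and from \eqref{w2} and \eqref{p1} we have $w_2\equiv 3l_2-l_1|\bfw|\pmod 2$ and $p_1=(3l_2^2-l_1^2(w_1^2+w_2^2))x^2$.

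I would then match the invariants one at a time. Condition (1) is exactly equality of the orders of $H^4$. For condition (2), the oddness of $l_1,w_1,w_2$ makes $w_1+w_2$ even, so $w_2\equiv 3l_2-l_1(w_1+w_2)\equiv l_2\pmod 2$, and equality of $w_2$ becomes $l'_2\equiv l_2\pmod 2$. Condition (4) is equality of linking forms: since the group generator $x^2$ is canonical from the ring structure, the only ambiguity is the sign coming from orientation-reversing homotopy equivalences (the $\{\pm1\}$ in the target), so matching $s$ reads $(l'_2)^3\equiv\pm l_2^3\pmod{w_1w_2l_1^2}$.

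For condition (3) I would first simplify $p_1$ inside $\bbz_{w_1w_2l_1^2}$. Since $w_1^2+w_2^2=|\bfw|^2-2w_1w_2$ and $2l_1^2w_1w_2=2(w_1w_2l_1^2)\equiv 0$, one gets $p_1\equiv (3l_2^2-l_1^2|\bfw|^2)x^2$. The homotopy, as opposed to homeomorphism, indeterminacy of the torsion $p_1$ is governed by the order $24$ of the image of the $J$-homomorphism in $\pi_3^s\cong\bbz_{24}$; because the oddness hypotheses force $N:=w_1w_2l_1^2$ to be odd, this indeterminacy collapses to $\gcd(3,N)$. Reducing $p_1$ modulo $3$ kills the $3l_2^2$ term and leaves $-l_1^2|\bfw|^2$, so matching the homotopy-invariant part of $p_1$ becomes exactly $(l'_1)^2|\bfw'|^2\equiv l_1^2|\bfw|^2\pmod 3$, which is condition (3). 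The finer $l_2$-dependence that drops out mod $3$ here is precisely the information carried instead by the linking-form condition (4).

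The main obstacle is the verification underlying condition (3): identifying the correct homotopy-invariant refinement of the torsion Pontrjagin class together with its indeterminacy, and confirming that the $l_2$-terms are consistently absorbed by conditions (2) and (4) rather than double-counted. This is the substance of Kruggel's normal-invariant computation, so the real task is to check carefully that $M^7_{l_1,l_2,\bfw}$ meets its hypotheses (simple connectivity, the cyclic $H^4$, and a linking form of the admissible type) and that the sign conventions in (3) and (4) are mutually compatible.
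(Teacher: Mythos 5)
Your proposal is correct and takes essentially the same route as the paper: the authors' entire proof consists of computing the invariants $w_2$ via \eqref{w2}, the torsion class $p_1$ via \eqref{p1}, and the linking form $s=l_2^3\in(\bbz_{w_1w_2l_1^2})^*/\{\pm 1\}$, and then specializing Theorem 5.1 of \cite{Kru97} to these data, exactly as you do. Your additional matching of each invariant to conditions (1)--(4), including the mod-3 collapse of the Pontrjagin-class indeterminacy when $|H^4|=w_1w_2l_1^2$ is odd, supplies detail that the paper leaves implicit rather than diverging from its argument.
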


We are interested in the 7-manifolds with the same cohomology ring, that is, condition (1) of Theorem \ref{Kruthm} is satisfied. 
Generally, there are a countably infinite number of such 7-manifolds. If we fix the integers $l_1,w_1,w_2$ we can choose $l_2$ to be any positive integer relatively prime to $w_1,w_2$ and $l_1$. However, we can also vary $l_1,w_1,w_2$ keeping the product $w_1w_2l_1^2$ fixed. If, for example, the order of $H^4$, denoted by $|H^4|$, is a prime $p$, then there is only one possibility for a fixed $l_2$, namely, $M^7_{1,l_2,(p,1)}$, whereas, if $|H^4|=p^2$, we have two, namely, $M^7_{p,l_2,(1,1)}$ and $M^7_{1,l_2,(p^2,1)}$, etc. The latter two are not homotopy equivalent for any $l_2$ since condition (3) of Theorem \ref{Kruthm} fails. In principle one can ascertain these 7-manifolds with $l_2$ and $|H^4|$ fixed from the prime decomposition of $|H^4|$, and then check for homotopy equivalence using Theorem \ref{Kruthm}.


The homeomorphism and diffeomorphism classification was done by Escher \cite{Esc05} under the assumption that either $2,3,$ are both relatively prime to $l_1w_i$ for $i=1,2$ or both $2,3$ are not relatively prime to $l_1w_i$ for $i=1,2$. The classification is quite involved relying on further work of Kruggel \cite{Kru05} to compute the Kreck-Stolz invariants $s_1,s_2,s_3\in \bbq/\bbz$ (the invariant $s_1$ is essentially due to Eells and Kuiper \cite{EeKu62}). 

\begin{example}
Kruggel's conditions show that for each positive integer $l_2$ relatively prime to $5$ the 7-manifolds $M^7_{5,l_2,(1,1)}$ and $M^7_{1,l_2,(25,1)}$ are homotopy equivalent but they are not homeomorphic as can be seen from Equation \eqref{p1}. Notice that $29^3+21^3\equiv 0 \mod 25$, so the four non-spin 7-manifolds 
$$M^7_{5,21,(1,1)},M^7_{5,29,(1,1)},M^7_{1,21,(25,1)},M^7_{1,29,(25,1)}$$ 
are all homotopy equivalent. Notice that the first two have equal first Pontrjagin class, but are not homeomorphic by Proposition \ref{KSprop} below. The second two also have equal first Pontrjagin class, but different from the first pair. So the last two are not homeomorphic to either of the first two. However, we have not checked Escher's conditions for possible homeomorphism equivalence of the last pair. Our guess is that they are not homeomorphic.

Another example is obtained by taking $l'_2=39,l_2=89$ or vice-versa. Since $39^3-89^3\equiv 0 \mod 25$, the four non-spin 7-manifolds 
$$M^7_{5,39,(1,1)},M^7_{5,89,(1,1)},M^7_{1,39,(25,1)},M^7_{1,89,(25,1)}$$
are homotopy equivalent. By Kreck and Stolz \cite{KS88} Theorem B, we see that $M^7_{5,39,(1,1)}$ and $M^7_{5,89,(1,1)}$ are homeomorphic, but not diffeomorphic. Furthermore, neither of the last two are homeomorphic to the first pair. It is easy to obtain diffeomorphic examples from Theorem B of \cite{KS88}. Take $M^7_{5,39,(1,1)},M^7_{5,139,(1,1)}$. More generally from Theorem B we have

\begin{proposition}\label{KSprop}
Let $l'_2,l_2$ be positive integers relatively prime to $5$. Then the manifolds $M^7_{5,l_2,(1,1)}$ and $M^7_{5,l'_2,(1,1)}$ are homeomorphic if and only if $l'_2\equiv l_2 \mod 50$, and they are diffeomorphic if and only if $l'_2\equiv l_2 \mod 100$.
\end{proposition}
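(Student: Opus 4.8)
The plan is to deduce the Proposition directly from the Kreck--Stolz classification already recorded in Theorem \ref{KSthm}, specialized to $l_1=5$. First I would make the identification explicit: in dimension $7$ we have $2p+3=7$, hence $p=2$, and since $\bfw=(1,1)$ the manifold $M^7_{5,l_2,(1,1)}$ is the homogeneous Wang--Ziller manifold $M_{5,l_2,(1,1)}=M^{1,2}_{l_2,5}$, i.e.\ the case $l_1=5$. The coprimality assumption $\gcd(l_1,l_2)=1$ that underlies the join construction is precisely the hypothesis that $l_2$ (and $l'_2$) be relatively prime to $5$, so Theorem \ref{KSthm} applies verbatim.

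For the homeomorphism assertion I would read off the first part of Theorem \ref{KSthm} at $l_1=5$. Since $5$ is odd, the criterion there is $l'_2\equiv l_2\mod 2l_1^2$, and $2\cdot 5^2=50$, which is exactly the claimed condition $l'_2\equiv l_2\mod 50$.

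For the diffeomorphism assertion I would evaluate the two exponents appearing in Theorem \ref{KSthm} at $l_1=5$. Since $5\equiv 5\mod 8$ lies in the case $l_1=3,5\mod 8$, we get $\grl_2(5)=2$; and since $5\equiv 5\mod 7$ lies in the case $l_1=1,2,5,6\mod 7$, we get $\grl_7(5)=0$. Hence the diffeomorphism modulus is $2^{\grl_2(5)}\,7^{\grl_7(5)}\,l_1^2=2^2\cdot 7^0\cdot 5^2=100$, giving $l'_2\equiv l_2\mod 100$, as asserted.

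The proof therefore carries no genuine obstacle: all the analytic input (the Eells--Kuiper invariant $s_1$ and the secondary invariants $s_2,s_3$ of \cite{KS88,Kru05}) is already absorbed into the closed-form congruences of Theorem \ref{KSthm}, so what remains is the arithmetic bookkeeping of substituting $l_1=5$ and checking the two case distinctions for $\grl_2$ and $\grl_7$. If an independent sanity check were wanted, the natural one would be consistency with the first Pontrjagin class of \eqref{p1} and the linking form $l_2^3$: a homeomorphism forces agreement of $p_1\in\bbz_{25}$ and of the linking form in $(\bbz_{25})^*/\{\pm1\}$, and one verifies these are implied by $l'_2\equiv l_2\mod 50$; but given Theorem \ref{KSthm} this step is superfluous.
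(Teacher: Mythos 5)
Your proposal is correct and follows essentially the same route as the paper: the paper obtains Proposition \ref{KSprop} by specializing Theorem B of Kreck--Stolz (recorded as Theorem \ref{KSthm}) to $l_1=5$, exactly as you do, having already noted that coprimality of $l_2$ with $5$ is what the join construction requires. Your evaluations $\grl_2(5)=2$, $\grl_7(5)=0$, giving moduli $2\cdot 5^2=50$ for homeomorphism and $2^2\cdot 7^0\cdot 5^2=100$ for diffeomorphism, match the paper's own arithmetic in its $l_1=5$ example.
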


\end{example}

\section{Admissible extremal and CSC rays}\label{admissiblesummary}
We now give a very brief summary of the relevant results from the admissible constructions in \cite{BoTo14a}. For the full details of the construction we refer to \cite{BoTo14a}.

Consider the $\bfw$-cone $\gt^+_\bfw$ of $M_{l_1,l_2,\bfw}$, where $w_1 \geq w_2$. A quasi-regular ray in 
$\gt^+_\bfw$ is determined by a weight vector $\bfv = (v_1,v_2)$ with relative prime components. Thus the ray is determined by the ratio $b= \frac{v_2}{v_1} \in \bbq^+$. By the denseness of the quasi-regular rays in $\gt^+_\bfw$, any ray in $\gt^+_\bfw$ is determined by a choice of $b \in \bbr^+$. 

We say that the ray determined by $b \in \bbr^+$ is an "admissible extremal ray", or "admissible CSC ray", 
if, up to isotopy, the Sasaki structure given by $\bfv$ is extremal or CSC 
such that the the transverse K\"ahler structure admits a hamiltonian $2$-form of order one.
In the case where $b \in \bbq^+$, i.e. the ray is quasi-regular (or even regular), this is equivalent to
the corresponding transverse K\"ahler metric being in the same K\"ahler class as an extremal or CSC K\"ahler metric  that admits a hamiltonian $2$-form of order one, i.e., is so-called "admissible" \cite{ACGT08}. 

\begin{definition}\cite{ApCaGa06}
Let $(S,J,\gro,g)$ be a K\"ahler manifold of real dimension $2n$. On $(S,J,\gro,g)$ a {\it Hamiltonian 2-form} is a $J$-invariant 2-form $\phi$ that satisfies the differential equation
\begin{equation}\label{ham2form}
2\nabla_X\phi = d\tr\phi\wedge (JX)^\flat-d^c\tr\phi\wedge X^\flat
\end{equation}
for any vector field $X$. Here $X^\flat$ indicates the 1-form dual to $X$, and $\tr\phi$ is the trace with respect to the K\"ahler form $\gro$, i.e. $\tr\phi=g(\phi,\gro)$ where $g$ is the K\"ahler metric.
\end{definition}
"Order one'' refers to the fact that $\phi$ naturally produces one linearly independent Hamiltonian Killing field
on $(S,J,\gro,g)$. The maximal order possible of a Hamiltonian 2-form would be $n$.

We now essentially list the findings from \cite{BoTo14a} as they apply to the case in hand.
\newpage

\begin{proposition}\label{admcsc}\hfill

\medskip

\begin{itemize}
\item Any ray in  $\gt^+_\bfw$ determined by of $b \in \bbr^+\setminus\{\frac{w_2}{w_1}\}$ is
admissible extremal. This admissible extremal structure is
CSC (quasi-regular or irregular) if and only if $f(b)=0$, where 
$$
\begin{array}{ccl}
f(b) & = &  -l_1w_1^{2p+3} b^{2 p+4}\\
\\
&  + & (  l_2 + l_1  w_2)w_1^{2(p+1)} b^{2 p+3}\\
\\
& - &  (( p+1)^2 l_2 - l_1 (( p+1) w_1 + ( p+2) w_2))w_1^{p+2}  w_2^{p} b^{p + 3} \\
\\
& + &  (2  p (p+2) l_2 - (2p+3) l_1 (w_1 + w_2))w_1^{p+1}  w_2^{p+1}  b^{p + 2} \\
\\
& - &   ( (p+1)^2 l_2 - l_1 ((p+2) w_1 + ( p+1) w_2))w_1^{p}  w_2^{p+2}  b^{p + 1}\\
\\
& + &  ( l_2 + l_1  w_1)w_2^{2 (p + 1)}b\\
\\
&- &  l_1 w_2^{2p+3}.
\end{array}
$$

\item The ray in  $\gt^+_\bfw$ determined by  $b = \frac{w_2}{w_1}$
is extremal (with transverse K\"ahler structure a product). This extremal structure is
CSC (regular) if and only if $w_1=w_2=1$.

\end{itemize}
\end{proposition}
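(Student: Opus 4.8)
The plan is to reduce both statements to the transverse Kähler geometry and to apply the admissible construction of \cite{ACGT08} as adapted to the log pairs $(S_n,\grD)$ in \cite{BoTo14a}. Recall (Theorem 3.7 of \cite{BoTo14a}, and Section~\ref{torsec}) that a quasi-regular ray in $\gt^+_\bfw$ with weight vector $\bfv=(v_1,v_2)$, i.e.\ $b=v_2/v_1\in\bbq^+$, has as quotient the log pair $(S_n,\grD)$ with $S_n=\bbp(\BOne\oplus L_n)$ and $\grD=(1-\tfrac1{m_1})D_1+(1-\tfrac1{m_2})D_2$, where the integer $n$ and the ramification indices $m_1,m_2$ are explicit functions of $l_1,l_2,w_1,w_2$ and $b$. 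Since the quasi-regular rays are dense in $\gt^+_\bfw$ and all of the quantities below depend continuously (indeed polynomially) on $b$, it suffices to prove each bullet for quasi-regular $b$ and then pass to the limit to cover every $b\in\bbr^+$.

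For the first bullet, fix $b\neq w_2/w_1$; this is exactly the condition $n\neq 0$, so the class in question is a genuinely admissible Kähler class in the sense of \cite{ACGT08}, carrying a Hamiltonian $2$-form of order one arising from the $\bbp^1$-fibration. First I would invoke the existence half of the admissible theory: writing the admissible metric in the normal form over $(\bbc\bbp^p,\gro_{FS})$ with moment coordinate $z$ on its interval and momentum profile $\Theta(z)$, the transverse extremal equation reduces to a single linear ODE for $(1+rz)^p\Theta(z)$ whose right-hand side is a polynomial; imposing the endpoint conditions $\Theta=0$ at the two ends of the momentum interval together with the first derivatives prescribed by the ramification indices $m_1,m_2$ determines the coefficients uniquely and produces an admissible extremal metric in the class. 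Lifting this transverse metric back to the ray shows the ray is admissible extremal.

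The extremal metric so constructed has transverse scalar curvature an \emph{affine} function of the Killing potential $z$, and it is CSC precisely when the slope of this affine function vanishes. Substituting the admissible data $(n,m_1,m_2,r)$ — together with the Fubini-Study constant $s_n=\tfrac{p+1}{n}$ and the moments of $(1+rz)^p$ over the momentum interval — into this vanishing-slope condition yields a single polynomial equation in $b$. The remaining step is to re-express $n,m_1,m_2,r$ in terms of $l_1,l_2,w_1,w_2,b$ and simplify; the outcome is $c\,f(b)=0$ with $c\neq 0$, the degree $2p+4$ and the exact pattern of surviving monomials ($b^{2p+4},b^{2p+3},b^{p+3},b^{p+2},b^{p+1},b^{1},b^{0}$) reflecting the pairing of the extremal polynomial against the weight $(1+rz)^p$. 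This coefficient-by-coefficient identification with the stated $f(b)$ is the genuinely computational part and the main obstacle; denseness then extends the equivalence ``CSC $\iff f(b)=0$'' to all $b\in\bbr^+\setminus\{w_2/w_1\}$.

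For the second bullet, $b=w_2/w_1$ is the degenerate value $n=0$, where $S_0=\bbc\bbp^p\times\bbc\bbp^1[\bfw]$ and the transverse Kähler structure splits as a product of the Fubini-Study metric on $\bbc\bbp^p$ with the metric on $\bbc\bbp^1[\bfw]$ induced by the weighted Sasakian structure on $S^3_\bfw$. The first factor is CSC, and a product of a CSC Kähler metric with an extremal one is again extremal (the gradient of the total scalar curvature is the pullback of the holomorphic gradient on the extremal factor), so the product ray is extremal. Finally, since the total scalar curvature is the \emph{sum} of the two factor scalar curvatures and the $\bbc\bbp^p$ term is already constant, the product is CSC iff the curve factor is CSC; the latter metric, being of complex dimension one, is extremal but has constant scalar curvature iff the two orbifold weights coincide, which under the coprimality assumption means $w_1=w_2=1$. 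This gives the regular CSC criterion of the second bullet.
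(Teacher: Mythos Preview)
Your proposal is correct and follows essentially the same route as the paper's sketch: both invoke the admissible construction of \cite{BoTo14a}, with the paper simply citing Theorem~1.2 and equation~(49) there (substituting $d_N=p$, $A=p+1$, and factoring out $p+1$) to obtain $f(b)$, while you spell out the underlying mechanism and the denseness argument. One minor slip in the second bullet: the transverse metric on $\bbc\bbp^1[\bfw]$ is extremal because of its $S^1$-symmetry (so that the scalar curvature gradient lies along the holomorphic rotation field), not merely because the complex dimension is one; otherwise your product argument matches the paper's terse ``true from the construction of the join.''
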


\begin{proof}(Sketch)
The first claim in the first bullet point is a straightforward consequence of Theorem 1.2 of  \cite{BoTo14a} and the proof thereof in Sections 4 and 5 of \cite{BoTo14a} which imply that the extremal metrics, constructed to prove the theorem, are in fact admissible.

For the CSC condition, we again look to Sections 4 and 5 and in particular equation (49) in \cite{BoTo14a}, setting $d_N=p$ and $A = (p+1)$.  After factoring out an inconsequential $(p+1)$ we arrive at
the stated $f(b)$.

The two statements under the second bullet are simply for completion and true from the construction of the join.
\end{proof}

\begin{remark}
If a uniqueness result, analogous to the uniqueness of smooth extremal K\"ahler metrics in the K\"ahler class \cite{ChTi05},  of extremal Sasakian metrics within its isotopy class could be established, then the statement in Proposition \ref{admcsc} could be used to decisively check for CSC rays in $\gt^+_\bfw$. So far no such uniqueness statement has been proved and thus we have to emphasize that our CSC condition for the ray is with the extra assumption that the CSC metric is the admissible extremal structure established in \cite{BoTo14a}. 
\end{remark}

\smallskip

Now, $f(b)$ in Proposition \ref{admcsc} has only $7$ terms and so by a very rough use of Descartes' rule of signs, it can have at most $6$ positive real roots counted with multiplicity. 

\subsection{Wang-Ziller manifolds}\label{WZsec}

We now assume that $w_1=w_2=1$. In this case
$$
\begin{array}{ccl}
f(b) & = &  -l_1b^{2 p+4}\\
\\
&  + & (  l_2 + l_1 ) b^{2 p+3}\\
\\
& - &  (( p+1)^2 l_2 -  (2p+3 )l_1) b^{p + 3} \\
\\
& + & 2 ( p (p+2) l_2 - (2p+3) l_1)b^{p + 2} \\
\\
& - &   ( (p+1)^2 l_2 - (2p+3 )l_1 )  b^{p + 1}\\
\\
& + &  ( l_2 + l_1  )b\\
\\
&- &  l_1.
\end{array}
$$
One can check that $f(b)$ has a root at $b=1$ of multiplicity at least $4$ and the fourth derivative $f^{(4)}(1) =  2 (1 + p)^2 (p+2) (p(p+1) l_2 -2(3+2p) l_1)$. Thus $f(b)$ has at most $3$ distinct positive real roots.
Notice that $f(b)=0 \iff f(1/b)=0$. This is to be expected. Indeed, while the rays determined by $b$ and $1/b$ are distinct in the unreduced $\gt^+_\bfw$, they represent the same ray in the reduced Sasaki cone $\gt^+/\calw$ in the case when $(w_1,w_2)=(1,1)$. The Weyl group $\calw$ of the Sasaki automorphism group is isomorphic to $\bbz_2$ and the rays determined by $(v_1,v_2)$ and
$(v_2,v_1)$ in $\gt^+_\bfw$ are equivalent.

By Decartes' sign rule, we observe that $f'(b)$ has at most 5 positive real roots counted with multiplicity and since $f'(b)$ has a triple root at $b=1$, we conclude that $f(b)$ has at most $3$ relative extrema on the interval $(0,+\infty)$.
Then, using that any roots of $f(b)$ other than $b=1$ must come in pairs of reciprocals together with that facts that
$f(0)<0$ and $\displaystyle \lim_{b\rightarrow +\infty} f(b) = -\infty$,
it is easy to see that
$f(b)$ has $3$ distinct positive real roots if and only if $f^{(4)}(1)>0$, which is equivalent to the condition 
$l_2 > \frac{2(3+2p)}{p(p+1)}l_1$.

In conclusion we have the following theorem that generalizes Theorem 1.3 of \cite{Leg10} to all dimensions.

\begin{theorem}\label{WZcsc}
The complement of the regular (CSC) ray in the sub cone $\gt^+_{(1,1)}$ of the unreduced Sasaki cone of 
$M^{1,p}_{l_2,l_1}$ is exhausted by admissible extremal rays. If $l_2 \leq \frac{2(3+2p)}{p(p+1)}l_1$, none of those admissible extremal structures are CSC. If $l_2 > \frac{2(3+2p)}{p(p+1)}l_1$, two of those admissible extremal structures are CSC. In that case, the two extra CSC rays (irregular or quasi-regular) represent the same ray in the reduced Sasaki cone and hence the reduced Sasaki cone has at least two distinct CSC rays; one regular and one irregular or quasi-regular.
\end{theorem}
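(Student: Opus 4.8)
The plan is to reduce the entire statement to a count of the positive real roots of the reciprocal polynomial $f(b)$ displayed above for $w_1=w_2=1$, using Proposition \ref{admcsc} as the bridge between root data and Sasaki geometry. The first assertion is then immediate: when $w_1=w_2=1$ the second bullet of Proposition \ref{admcsc} identifies the ray $b=1$ as the regular CSC ray, while the first bullet says every ray with $b\in\bbr^+\setminus\{1\}$ is admissible extremal; so the complement of the regular ray is exhausted by admissible extremal rays, and among those the CSC ones are precisely the rays with $f(b)=0$, $b\neq 1$. Everything else is a matter of locating the roots of $f$ other than $b=1$.

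First I would record the structural features of $f$ already in hand. The polynomial is palindromic of degree $2p+4$, so $f(b)=0\iff f(1/b)=0$ and positive roots occur in reciprocal pairs $\{b_0,1/b_0\}$, the only self-reciprocal positive value being $b=1$; moreover $b=1$ is a root of multiplicity at least $4$, and the rough Descartes bound gives at most $6$ positive roots counted with multiplicity. Subtracting the multiplicity at $b=1$ leaves at most $2$ further positive roots with multiplicity. Since any positive root other than $1$ is matched with its distinct reciprocal, these at most two residual roots are either absent or form a single simple reciprocal pair—a double root off $b=1$ would drag along its reciprocal double and overshoot the bound. The whole question is whether this pair is present.

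Next I would let the sign of $f^{(4)}(1)=2(1+p)^2(p+2)\bigl(p(p+1)l_2-2(3+2p)l_1\bigr)$ decide the matter through the local behaviour $f(b)\sim \tfrac{f^{(4)}(1)}{24}(b-1)^4$ near $b=1$. If $f^{(4)}(1)>0$, i.e. $l_2>\tfrac{2(3+2p)}{p(p+1)}l_1$, then $f>0$ on both sides of $b=1$; combined with $f(0)=-l_1<0$ and $\lim_{b\to+\infty}f(b)=-\infty$, the intermediate value theorem forces a root in $(0,1)$ and one in $(1,\infty)$, which by reciprocity is exactly the simple pair—the two CSC admissible extremal rays of the third assertion. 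If $f^{(4)}(1)<0$ then $f<0$ on both sides of $b=1$; were a simple reciprocal pair $b_0<1<1/b_0$ present, then starting from $f(0)<0$ the simple crossing at $b_0$ would make $f>0$ on $(b_0,1)$—the even-order point $b=1$ produces no sign change—contradicting $f<0$ just to the left of $1$, so no extra roots occur. The boundary case $f^{(4)}(1)=0$ I would handle by the observation that the multiplicity of $b=1$ is necessarily even: under $u=b+1/b$ one has $b=1\leftrightarrow u=2$ and $u-2=(b-1)^2/b$, so a root of order $k$ at $u=2$ produces order $2k$ at $b=1$; hence here the multiplicity is at least $6$, which already saturates the Descartes bound and leaves no room for other positive roots. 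These cases together cover $l_2\leq\tfrac{2(3+2p)}{p(p+1)}l_1$ with no CSC ray in the complement, giving the second assertion.

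Finally I would translate the root count into the reduced Sasaki cone. The Weyl group $\calw\cong\bbz_2$ acts on $\gt^+_{(1,1)}$ by $(v_1,v_2)\mapsto(v_2,v_1)$, i.e. $b\mapsto 1/b$, fixing $b=1$ and interchanging $b_0$ with $1/b_0$; hence the reciprocal CSC pair descends to a single CSC ray in $\gt^+/\calw$, distinct from the regular ray $b=1$ since $b_0\neq 1$, and quasi-regular or irregular according as $b_0\in\bbq^+$ or not. This produces the two distinct CSC rays of the last assertion. \emph{The main obstacle} is the $f^{(4)}(1)\le 0$ direction: the Descartes and extrema bounds only cap root counts, so the real work is showing sharply that no residual reciprocal pair can hide, and in particular that the even-multiplicity observation pins the borderline case $l_2=\tfrac{2(3+2p)}{p(p+1)}l_1$ on the ``no CSC'' side rather than the ``two CSC'' side.
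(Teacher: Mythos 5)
Your proposal is correct, and its skeleton is the paper's: Proposition \ref{admcsc} reduces the theorem to counting positive roots of the palindromic polynomial $f$, using the Descartes bound of $6$ (seven terms), the root of multiplicity at least $4$ at $b=1$, the sign of $f^{(4)}(1)=2(1+p)^2(p+2)\bigl(p(p+1)l_2-2(3+2p)l_1\bigr)$, the reciprocity $f(b)=0\iff f(1/b)=0$, the boundary data $f(0)=-l_1<0$ and $f(b)\to-\infty$, and finally the Weyl group $\calw\cong\bbz_2$ identifying $b_0$ with $1/b_0$ in the reduced cone. Where you genuinely diverge is in closing the ``no extra roots'' direction. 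The paper applies Descartes to $f'$ (at most $5$ positive roots with multiplicity, three of them at $b=1$) to bound the relative extrema of $f$ on $(0,+\infty)$ by $3$, and then declares the equivalence with $f^{(4)}(1)>0$ ``easy to see''; you instead note that the residual roots beyond $b=1$ can only be a single simple reciprocal pair (a double root off $1$ would drag its reciprocal double past the bound), kill that pair when $f^{(4)}(1)<0$ by a direct sign/IVT contradiction on $(b_0,1)$, and settle the borderline case $f^{(4)}(1)=0$ with the parity lemma: since $f$ is self-reciprocal of even degree, $f(b)=b^{p+2}g(u)$ with $u=b+1/b$ and $u-2=(b-1)^2/b$, so the multiplicity of $b=1$ is even, hence at least $6$, saturating the Descartes bound. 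This buys an explicit, airtight treatment of the boundary value $l_2=\frac{2(3+2p)}{p(p+1)}l_1$, which the paper leaves implicit (and which, as you correctly flag, is the one point where a bound-only argument could hide a root); conversely, the paper's extrema count gives a uniform qualitative picture of the graph of $f$ without needing the substitution lemma. Both routes land on the same conclusion, including the quasi-regular versus irregular dichotomy according to $b_0\in\bbq^+$ or not.
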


Note that if the first Chern class of the contact bundle $\cald_{l_1,l_2,(1,1)}$ 
is non-positive, we have from \eqref{c1} that $l_2 \leq \frac{2}{p+1} l_1 $. Thus we easily have the following corollary.

\begin{corollary}\label{wznonmulti}
When the first Chern class of the contact bundle $\cald_{l_1,l_2,(1,1)}$ 
is non-positive,
the complement of the regular (CSC) ray in the sub cone $\gt^+_{(1,1)}$ of the unreduced Sasaki cone of 
$M^{1,p}_{l_2,l_1}$ is exhausted by admissible, non-CSC, extremal rays
\end{corollary}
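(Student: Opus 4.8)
The plan is to reduce the corollary directly to Theorem \ref{WZcsc} by translating the non-positivity of the first Chern class into a comparison of the two threshold values of $l_2/l_1$ that appear there. First I would recall, as already noted just above the corollary, that for $\bfw=(1,1)$ we have $|\bfw|=2$, so equation \eqref{c1} gives $c_1(\cald_{l_1,l_2,(1,1)})=(l_2(p+1)-2l_1)\grg$. Hence non-positivity of the first Chern class is exactly the inequality $l_2(p+1)\leq 2l_1$, that is, $l_2\leq \frac{2}{p+1}\,l_1$.

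The key step is then the elementary observation that the CSC threshold appearing in Theorem \ref{WZcsc} dominates this bound. Specifically, I would verify that for every $p\geq 1$ one has $\frac{2}{p+1}\leq \frac{2(3+2p)}{p(p+1)}$; clearing the common denominator $\frac{2}{p(p+1)}$, this is just the inequality $p\leq 2p+3$, which holds trivially. Consequently the hypothesis $l_2\leq \frac{2}{p+1}\,l_1$ forces the stronger-sounding inequality $l_2\leq \frac{2(3+2p)}{p(p+1)}\,l_1$, which is precisely the hypothesis of the first (non-multiplicity) case in Theorem \ref{WZcsc}.

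Finally I would invoke Theorem \ref{WZcsc} directly. Its opening sentence already exhausts the complement of the regular CSC ray in $\gt^+_{(1,1)}$ by admissible extremal rays, and under the inequality $l_2\leq \frac{2(3+2p)}{p(p+1)}\,l_1$ its next sentence asserts that none of these admissible extremal structures is CSC. Combining these two facts yields exactly the claim that the complement of the regular ray is exhausted by admissible, non-CSC, extremal rays. There is no genuine obstacle here: the corollary is a direct specialization of Theorem \ref{WZcsc}, and the only content is the trivial comparison of the two explicit fractions that places the non-positive Chern class regime inside the single-CSC-ray range.
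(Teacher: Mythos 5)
Your proposal is correct and follows essentially the same route as the paper: the authors likewise use \eqref{c1} to translate non-positivity of $c_1(\cald_{l_1,l_2,(1,1)})$ into $l_2\leq \frac{2}{p+1}l_1$ and then invoke Theorem \ref{WZcsc}, with the comparison $\frac{2}{p+1}\leq \frac{2(3+2p)}{p(p+1)}$ (i.e.\ $p\leq 2p+3$) left implicit in their ``easily.'' Your only addition is making that elementary inequality explicit, which is a faithful filling-in of the paper's one-line argument.
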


Note that in the case of multiple CSC rays on the Wang-Ziller manifolds, the non-regular ray is usually irregular, but in some cases it might be quasi-regular. For $p=1$, this can already be deduced from the work in \cite{Leg10}.
For general $p$ we have for example that any co-prime $(l_1,l_2)$ satisfying that
$$2(1+2^p(2^{2+p}-(p^2+2p+5)))l_2 = (-1+2^{p+1}(2^{p+2} - (2p+3))l_1$$
satisfies that $f(b)$ has real positive rational roots 
$b=1/2, 1$, and $2$.

We refer the reader to Section 5.1 of \cite{BoTo14a} for a discussion of the Wang-Ziller manifolds in the case of $p=2$. In particular, we direct the readers attention to Theorem 5.6 of that section.

\subsection{The non-homogenous case}

We now assume that $w_1>w_2$. Note that in this case each ray in $\gt^+_\bfw$ determines a distinct ray in the reduced Sasaki cone. By assuming $w_1>w_2$, as opposed to just $w_1 \neq w_2$, we have ``used up'' the Weyl group.

As we already observed in \cite{BoTo14a}, in this case, the polynomial function $f(b)$ always has a  a root in the interval $(\frac{w_2}{w_1},+\infty)$ and thus in general we have that at least one of the admissible extremal rays in 
$\gt^+_\bfw$ is CSC. It is easy to see that
$f(b)$ has a triple root at $b=\frac{w_2}{w_1}$ (not corresponding to an admissible ray). Since $f(b)$ can have at most $6$ real positive roots, counted with multiplicity, this implies that for $w_1>w_2$, at most $3$ of the admissible extremal structures established for the rays in $\gt^+_\bfw$ are CSC.
Now, as was already observed in \cite{BoTo14a}, in the case when $w_1>w_2$, $f(b)$ does indeed have $3$ admissible CSC rays when $l_2$ is sufficiently large (see Theorem 1.3 and Section 5.2 in \cite{BoTo14a}).
Note that exactly how large $l_2$ needs to be depends on the other values $l_1,w_1$, and $w_2$. It seems to be very complicated to get some general bounds on how big $l_2$ needs to be, to guarantee the multiple admissible CSC ray phenomenon. We can however get an analogue of Corollary \ref{wznonmulti}.
\begin{proposition}
Assume $\bfw \neq (1,1)$.
When the first Chern class of the contact bundle $\cald_{l_1,l_2,\bfw}$ 
is non-positive, one and only one ray in $\gt^+_\bfw$ satisfies that the admissible extremal structure constructed in
\cite{BoTo14a} is CSC.
\end{proposition}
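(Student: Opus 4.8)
The plan is to show that under the non-positivity hypothesis there is exactly one admissible CSC ray by counting positive real roots of $f(b)$ on the relevant interval. First I would translate the hypothesis $c_1(\cald_{l_1,l_2,\bfw})\leq 0$ via \eqref{c1} into the inequality $l_2(p+1)\leq l_1|\bfw|$, i.e.\ $l_2\leq \frac{l_1(w_1+w_2)}{p+1}$. This numerical constraint on $l_2$ relative to $l_1,w_1,w_2$ is what will force the sign pattern of $f$ to be rigid. The admissible rays correspond to $b\in\bbr^+\setminus\{\tfrac{w_2}{w_1}\}$, and by Proposition \ref{admcsc} the admissible CSC rays are exactly the positive roots of $f(b)$ other than $b=\tfrac{w_2}{w_1}$; recall that $f$ has a triple root there which does not correspond to an admissible ray.

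The key step is a refined root count. I would first reconfirm, as noted just before the statement, that $f(b)$ has a triple root at $b=\tfrac{w_2}{w_1}$ and that $f(0)<0$ together with $\lim_{b\to+\infty}f(b)=-\infty$. Dividing out the factor $(w_1 b-w_2)^3$ leaves a quartic $q(b)$ whose positive roots (other than any at $\tfrac{w_2}{w_1}$ itself, which one checks is not a root of $q$) are precisely the admissible CSC rays. The existence of at least one CSC ray in $(\tfrac{w_2}{w_1},+\infty)$ is already established in the excerpt, so the entire content is the \emph{uniqueness} half: I must show the non-positivity hypothesis rules out the extra pairs of roots that produce the triple-CSC phenomenon for large $l_2$. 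Concretely, since $f(0)<0$ and $f\to-\infty$, the number of sign changes of $f$ on $(0,\infty)$ is even, so roots off $b=\tfrac{w_2}{w_1}$ come in even-multiplicity-adjusted groupings; the goal is to pin the count of simple admissible roots at exactly $2$ counted across $(0,\tfrac{w_2}{w_1})$ and $(\tfrac{w_2}{w_1},\infty)$ but with only one of these being admissible in the reduced-cone sense once $w_1>w_2$ separates reciprocal rays. I would argue that when $l_2$ is as small as the non-positivity bound forces, $f$ is strictly monotone on each side of $\tfrac{w_2}{w_1}$ away from the triple root, so it crosses zero exactly once on $(\tfrac{w_2}{w_1},+\infty)$ and not at all on $(0,\tfrac{w_2}{w_1})$, or vice versa, giving a single admissible CSC ray.

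The main obstacle will be controlling the signs of the intermediate coefficients of $f(b)$ under only the single inequality $l_2\leq \frac{l_1(w_1+w_2)}{p+1}$: Descartes' rule alone gives the upper bound of $3$ admissible CSC rays but not the sharp count of $1$. The cleanest route is probably to substitute $b=\tfrac{w_2}{w_1}(1+t)$ to center the triple root at $t=0$, expand $f$ as $(w_2/w_1)^{3}\,t^3\,\tilde q(t)$ (up to a positive constant), and show that the sign of $\tilde q$ on the admissible range of $t$ is governed by a coefficient proportional to $\bigl(l_2(p+1)-l_1|\bfw|\bigr)$ or a manifestly positive multiple thereof; the non-positivity hypothesis then fixes this sign, forcing $\tilde q$ to have exactly one admissible root with the correct location relative to $\tfrac{w_2}{w_1}$. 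I expect the hardest calculation to be verifying that the relevant Taylor coefficient of $f$ at $b=\tfrac{w_2}{w_1}$ factors through the Chern-class quantity, mirroring how $f^{(4)}(1)$ factored through $\bigl(p(p+1)l_2-2(2p+3)l_1\bigr)$ in the Wang–Ziller subcase; once that factorization is in hand, the uniqueness follows by the same monotonicity-and-endpoint argument used to prove Theorem \ref{WZcsc}.
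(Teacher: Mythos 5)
There is a genuine gap: the uniqueness half of the statement---which, as you yourself note, is the entire content---is never actually established. Your plan is to expand $f$ about the triple root $b=\frac{w_2}{w_1}$ and argue by monotonicity, but you leave the decisive step as a hope (``I expect the hardest calculation to be verifying that the relevant Taylor coefficient \dots factors through the Chern-class quantity''), so what you have is a program, not a proof. Two concrete claims along the way are also false. First, dividing $f$, which has degree $2p+4$, by $(w_1b-w_2)^3$ leaves a polynomial of degree $2p+1$, not a quartic (it is a cubic only when $p=1$, as in the paper's final example). Second, the reciprocal symmetry $f(b)=0\iff f(1/b)=0$, which you invoke to pair off roots and then identify them in the reduced cone, holds only in the Wang--Ziller case $\bfw=(1,1)$; under the standing assumption $w_1>w_2$ the Weyl group is already ``used up'' and each ray of $\gt^+_\bfw$ is a distinct ray of the reduced Sasaki cone, so your accounting ``exactly two roots but one admissible ray after reduction'' has no basis. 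The correct count must be: at most one positive root of $f$ besides the triple root, full stop.

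Moreover, you explicitly dismiss the tool that actually closes the argument. You write that ``Descartes' rule alone gives the upper bound of $3$ admissible CSC rays but not the sharp count of $1$,'' but that is only true of the crude count that ignores the hypothesis. The paper's proof is precisely a \emph{refined} Descartes count: the inequality $l_2(p+1)\leq l_1|\bfw|$ forces $(p+1)^2l_2\leq (p+1)l_1|\bfw| < l_1\bigl((p+1)w_1+(p+2)w_2\bigr)$, similarly $(p+1)^2l_2 < l_1\bigl((p+2)w_1+(p+1)w_2\bigr)$, and $2p(p+2)l_2\leq \frac{2p(p+2)}{p+1}\,l_1|\bfw| < (2p+3)\,l_1|\bfw|$ since $2p^2+4p<2p^2+5p+3$. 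Hence the seven coefficients of $f$, in decreasing degree $2p+4,\,2p+3,\,p+3,\,p+2,\,p+1,\,1,\,0$, have signs $-,+,+,-,+,+,-$: four sign changes, so at most four positive real roots counted with multiplicity. The triple root at $b=\frac{w_2}{w_1}$ consumes three of these, leaving at most one admissible root; existence of at least one root in $\bigl(\frac{w_2}{w_1},+\infty\bigr)$ was already observed, so there is exactly one. (In fact your own remark that $f(0)<0$ and $f(b)\rightarrow-\infty$ forces an even total root count would give existence independently, since the triple root has odd multiplicity.) The repair to your write-up is therefore short: verify the three coefficient-sign flips under the hypothesis and apply Descartes' rule, rather than pursuing the Taylor-expansion and monotonicity route.
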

\begin{proof}
When
the first Chern class of the contact bundle $\cald_{l_1,l_2,\bfw}$ 
is non-positive, we have from \eqref{c1} that $l_2(p+1)\leq l_1(w_1+w_2)$. Hence we get that the coefficients in $f(b)$ of $b^{2p+4},b^{2p+3}, b^{p+3}, b^{p+2},b^{p+1}$, and $b^0$, are
{\it negative, positive, positive, negative, positive, positive}, and {\it negative} respectively. Thus, a more careful use of Descartes' rule of signs reveals that here we will have at most $4$ positive real roots of $f(b)$ counted with multiplicity. Together with the observation above that $f(b)$ has a triple root at $b=\frac{w_2}{w_1}$, this finishes the proof of the proposition.
\end{proof}

In section 5.1 of \cite{BoTo14a} we presented a few specific examples dealing with  $M_{l_1,l_2,\bfw}=S^{2p+1}\star_{l_1,l_2}S^3_\bfw$ . For instance Example 5.2 in \cite{BoTo14a} shows that while most of our admissible CSC structures are irregular, there is still an infinite number of quasi-regular CSC examples in all dimensions. Example 5.3, culminating in Proposition 5.4,  in \cite{BoTo14a} investigates carefully the case $p=2$, $l_1=1$, and $\bfw=(3,2)$ and give a precise condition on the size required of $l_2$ in order to obtain the multiple CSC rays.

We will finish this section by looking a bit closer at a $p=1$ case.

\begin{example} $N=\bbc\bbp^1$. When $p=1$, we have that
$$
\begin{array}{ccl}
f(b) & = &  -l_1w_1^{5} b^{6}\\
\\
&  + & (  l_2 + l_1  w_2)w_1^{4} b^{5}\\
\\
& - &  (4 l_2 - l_1 (2 w_1 + 3 w_2))w_1^{3}  w_2 b^{4} \\
\\
& + &  (6 l_2 - 5 l_1 (w_1 + w_2))w_1^{2}  w_2^{2}  b^{3} \\
\\
& - &   ( 4 l_2 - l_1 (3 w_1 + 2 w_2))w_1  w_2^{3}  b^{2}\\
\\
& + &  ( l_2 + l_1  w_1)w_2^{4}b\\
\\
&- &  l_1 w_2^{5}.
\end{array}
$$

Note that $f(b) = (bw_1-w_2)^3 g(b)$, where
$$
\begin{array}{ccl}
g(b) & = & -l_1w_1^2 b^3\\
\\
&+& w_1(l_2-2l_1w_2) b^2\\
\\
&-&  w_2(l_2-2l_1w_1) b\\
\\
&+& l_1 w_2^2.
\end{array}
$$
Further $g(\frac{w_2}{w_1}) = \frac{3l_1w_2^2(w_1-w_2)}{w_1} >0$ and $\displaystyle\lim_{b\rightarrow +\infty} f(b) = -\infty$, confirming the existence of at least one positive real root, not equal to the forbidden $\frac{w_2}{w_1}$.
By Descartes' sign rule, we see that for $g(b)$ to have any chance of $3$ positive real roots, we must have that $l_2>2l_1w_1$. Assuming this, then it is easy to see that $g''(b)$ is positive for $b<0$ and since $g(0) >0$ while $g'(0) <0$, there can be no negative real roots. Therefore, we can simply check the criterion for $g(b)$ to have $3$ simple real roots (which must then be positive). For this to happen, the discriminant for the cubic must be positive.
In general, the discriminant is a quartic in the variables $l_1,l_2$, so a general bound on $l_2$ in terms of $l_1$ and $\bfw = (w_1,w_2)$ is not easy to achieve. But for any given example, it is straightforward to determine.
For example, let us assume that $\bfw = (3,2)$ and $l_1=1$. Notice that then $l_1(w_1+w_2)=5$ is odd and so the diffeomorphism type of $M_{l_1,l_2,\bfw}$ is here the non-trivial $S^3$-bundle over $S^2$.
In this case, the discriminant of $g(b)$ is positive if and only if
$l_2\geq 19$. For example, when $l_2=19$ we get one rational root at $b=1/3$ and two irrational roots $b= \frac{7\pm\sqrt{37}}{3}$, corresponding to one quasi-regular CSC  ray and two irregular CSC rays.
\end{example}

\section{Iteration of the Procedure}
We can iterate the procedure with the assumption that the Sasaki manifold $M$ on the left is regular. Thus, we take $M$ to be a Wang-Ziller manifold, $M=M^{p,q}_{k_2,k_1}$ and consider the join 
$$M^{p,q}_{\bfk,\bfl,\bfw}=M^{p,q}_{k_2,k_1}\star_{l_1,l_2}S^3_\bfw.$$
It is clear that the general construction of \cite{BoTo14a} applies to this case. However, the topological question of interest is: do the manifolds $M^{p,q}_{\bfk,\bfl}$ have a finite number of diffeomorphism types when the positive integers $p,q,l_1,l_2$ are fixed, but the positive integers $k_1,k_2$ are allowed to vary? For simplicity we consider the case $p=q=1$. Let us look at the Leray-Serre spectral sequence for $M^{1,1}_{\bfk,\bfl}$ with $l_1,l_2$ positive relatively prime integers. The Wang-Ziller manifold is $M^{1,1}_{k_2,k_1}\approx S^2\times S^3$ independent of $k_1,k_2$, so the fiber in the top row of diagram \eqref{orbifibrationexactseq} is $S^2\times S^3\times S^3_\bfw$. 

We need the $d_2$ differentials in the Leray-Serre spectral sequence of the fibration 
$$S^2\times S^3\ra{2.1}\bbc\bbp^1\times \bbc\bbp^1\ra{2.1} BS^1.$$
We have $d_2:E^{0,3}_2\ra{1.4} E^{2,2}_2$ is given by $d_2(\grb)=\gra\otimes s$ where $\gra,\grb$ are the 2 and 3 classes on the fiber, and $s$ is the 2-class on the base.

Returning to the fibration in the top row of diagram \eqref{orbifibrationexactseq} we have by the commutativity of the diagram that $d_2(\grb)=l_2\gra\otimes s$. This gives $E_4^{0,3}\approx \bbz$ and $E_4^{2,2}\approx \bbz_{l_2}$. Again from the commutativity of diagram \eqref{orbifibrationexactseq} we have $d_4:E^{0,3}_4\ra{1.4} E^{4,0}_4$ is $d_4(\grg_\bfw)=w_1w_2l_1^2s^2$ which gives $E_4^{4,0}\approx \bbz_{w_1w_2l_1^2}$ and $E_\infty^{0,3}=0$. Using Poincar\'e duality and universal coefficients give the ring structure of the simply connected 7-manifold $M^{1,1}_{\bfk,\bfl,\bfw}$, viz.
$$H^*(M^{1,1}_{\bfk,\bfl,\bfw},\bbz)=\bbz[x,y,u,z]/(x^2,l_2xy,w_1w_2l_1^2y^2,z^2,u^2,zu,zx,ux,uy)$$
where $x,y$ are 2-classes, and $z,u$ are 5-classes.

Unfortunately, these appear to be non-formal by Theorem 12 of the recent work of Biswas, Fern\'andez, Mu\~noz, and Tralle \cite{BFMT14}. Their example is ours with $l_1=l_2=1$ and $\bfw=(2,1)$. Thus, it seems unlikely that a finiteness theorem of the type of Theorem \ref{findiffeo} is available for this case.

\def\cprime{$'$} \def\cprime{$'$} \def\cprime{$'$} \def\cprime{$'$}
  \def\cprime{$'$} \def\cprime{$'$} \def\cprime{$'$} \def\cprime{$'$}
  \def\cdprime{$''$} \def\cprime{$'$} \def\cprime{$'$} \def\cprime{$'$}
  \def\cprime{$'$}
\providecommand{\bysame}{\leavevmode\hbox to3em{\hrulefill}\thinspace}
\providecommand{\MR}{\relax\ifhmode\unskip\space\fi MR }
\providecommand{\MRhref}[2]{%
  \href{http://www.ams.org/mathscinet-getitem?mr=#1}{#2}
}
\providecommand{\href}[2]{#2}

\end{document}